\definecolor{black}{rgb}{0.0, 0.0, 0.0}
\definecolor{red}{rgb}{1.0, 0.5, 0.5}
\newcommand{\margnote}[1]{
\ifthenelse{\boolean{shownotes}}%
{\marginpar{\raggedright\tiny\texttt{#1}}}%
{}%
}
\newcommand{\hole}[1]{
\ifthenelse{\boolean{shownotes}}%
{\begin{center} \fbox{ \rule {.25cm}{0cm} \rule[-.1cm]{0cm}{.4cm}
\parbox{.85\textwidth}{\begin{center} \texttt{#1}\end{center}} \rule
{.25cm}{0cm}}\end{center}} {} }
\title[]{Critical thresholds in 1D Euler equations with nonlocal forces}
\author[Carrillo]{Jos\'{e} A. Carrillo}
\address[Jos\'{e} A. Carrillo]{\newline Department of Mathematics
    \newline Imperial College London, London SW7 2AZ, United Kingdom}
\email{carrillo@imperial.ac.uk}
\author[Choi]{Young-Pil Choi}
\address[Young-Pil Choi]{\newline Department of Mathematics
    \newline Imperial College London, London SW7 2AZ, United Kingdom}
\email{young-pil.choi@imperial.ac.uk}
\author[Tadmor]{Eitan Tadmor}
\address[Eitan Tadmor]{\newline Center for Scientific Computation and Mathematical  Modeling (CSCAMM), Department of Mathematics, and Institute for Physical Science \& Technology, \ University of Maryland, College Park, USA}
\email{tadmor@cscamm.umd.edu}
\author[Tan]{Changhui Tan}
\address[Changhui Tan]{\newline Center for Scientific Computation and Mathematical  Modeling (CSCAMM) and Department of Mathematics, \ University of Maryland, College Park, USA}
\email{ctan@cscamm.umd.edu}
\subjclass{92D25, 35Q35, 76N10}
\keywords{flocking, alignment, hydrodynamics, regularity, critical thresholds.}
\numberwithin{equation}{section}
\newtheorem{theorem}{Theorem}[section]
\newtheorem{lemma}{Lemma}[section]
\newtheorem{corollary}{Corollary}[section]
\newtheorem{proposition}{Proposition}[section]
\newtheorem{remark}{Remark}[section]
\newcommand{\R}{\mathbb R}
\newcommand{\mc}{\mathcal C}
\newcommand{\bq}{\begin{equation}}
\newcommand{\eq}{\end{equation}}
\newcommand{\lt}{\left}
\newcommand{\rt}{\right}
\newcommand{\pa}{\partial}
\newcommand{\mw}{W}
\newcommand{\kk}{\pa_x^2K}
\def\charf {\mbox{{\text 1}\kern-.30em {\text l}}}
\def\u{u}
\def\x{x}
\def\y{y}
\def\grad{\pa_x}
\def\div{\pa_x}
\begin{document}
%%%%%%%%%%%%%%%%
\allowdisplaybreaks

\date{\today}

%\subjclass[]{}
%\keywords{}

\begin{abstract} We study the critical thresholds for the compressible
  pressureless Euler equations with pairwise attractive or repulsive
  interaction forces and non-local alignment forces in velocity in one
  dimension. We provide a complete description for the critical
  threshold to the system without interaction forces leading to a
  sharp dichotomy condition between global in time existence or
  finite-time blow-up of strong solutions. When the interaction forces
  are considered, we also give a classification of the critical
  thresholds according to the different type of interaction forces. We
  also remark on global in time existence when the repulsion is modeled by the isothermal pressure law.
\end{abstract}

\maketitle \centerline{\date}

\tableofcontents

%%%%%%%%%%%%%%%%%%%%%%%%%%%%%%%%%%%%%%%%%%%%%%%%%%%%%%%%%%%%%%%%%%%%%%%%%%%%%%%%%5
%
%
%                        Section: Introduction 
%
%
%%%%%%%%%%%%%%%%%%%%%%%%%%%%%%%%%%%%%%%%%%%%%%%%%%%%%%%%%%%%%%%%%%%%%%%%%%%%%%%%%
\section{Introduction and statement of main results}\label{sec1}
We are concerned with the following 1D system of presureless Euler equations with nonlocal interaction and alignment forces\begin{subequations}\label{main-eq}
\begin{eqnarray}
&&\pa_t \rho + \pa_x (\rho u) = 0,\quad x \in \R, \quad t > 0, \label{main-eq:a} \\
&&\,\,\,\, \pa_t u + u\pa_x u =  \int_{\R} \psi(x-y)(u(y,t) - u(x,t))\rho(y,t)dy
- \pa_x K \star \rho,\label{main-eq:b}
\end{eqnarray}
\end{subequations}
subject to initial density and velocity
\[(\rho(\cdot,t),u(\cdot,t))|_{t=0} = (\rho_0, u_0).\]
Since the total mass is conserved in time, we may assume, without loss
of generality, that $\rho$ is a probability density function, i.e., $\|\rho(\cdot,t)\|_{L^1} = 1$.

The system involves two types of non-local forces arising in many different fields such as collective behavior patterns in mathematical biology, opinion dynamics, granular media and others. In the particular context of multi-agents interactions, the system \eqref{main-eq} arises as macroscopic descriptions for individual based models (IBMs) of the form
\[
\dot{x}_i=v_i,\quad\dot{v}_i=\frac{1}{N}\sum_{j=1}^N\psi(x_i-x_j)(v_j-v_i) -
\frac{1}{N}\sum_{j=1}^N\pa_{x_i}K(x_i-x_j),
\]
where the force consists of attractive-alignment-repulsive  interactions, under a ``three-zone''  framework proposed in \cite{Ao,Reyn,HW}. Starting from the basic system of IBMs, one can derive a kinetic description by BBGKY hierarchies or mean field limits, see \cite{HT,CCR,review} and the references therein. The hydrodynamic equations of the form \eqref{main-eq} are obtained by taking moments on the kinetic equations and assuming a closure based on a monokinetic distribution, see \cite{CDMBC,HT,CDP,review} for details. These hydrodynamic equations lead to numerical solutions which share common features with the original IBMs systems such as flocks and mills patterns as demonstrated in \cite{CKMT}.

The first term on the right of (\ref{main-eq:b}) represents a non-local \emph{alignment}, where $\psi\in \mw^{1,\infty}(\R)$ is the \emph{influence function} which is assumed symmetric and uniformly bounded 
\[%\label{condi-psi}
0 \leq \psi_m\leq \psi(x) = \psi(-x) \leq \psi_M.
\]
The second term on the right of (\ref{main-eq:b}) represents \emph{attractive and/or repulsive} forces, through a symmetric smooth enough \emph{interaction potential}. We will start by assuming the regularity $K \in \dot{\mw}^{2,1}(\R)$. If the potential is convex (resp. concave) in $x$, the forces are attractive (resp. repulsive). 

%\subsection{Euler-Alignment system}
We begin our discussion with the case in which the particles are only driven by alignment. Setting the attraction/repulsion force $K\equiv0$, we arrive at a system of 1D mass and momentum Euler equations coupled with the alignment force, 
\begin{align}\label{eq:EA}
\begin{aligned}
&\pa_t \rho + \pa_x (\rho u) = 0,\cr
&\pa_t u + u\pa_x u = \int_{\R} \psi(x-y)(u(y,t) - u(x,t))\rho(y,t)dy. 
\end{aligned}
\end{align}
We refer \eqref{eq:EA} as the \emph{Euler-Alignment} system for
short. It is realized as the  hydrodynamic system of the Cucker-Smale
flocking model \cite{CS,HT}. 

The authors in \cite{TT}  have recently
shown that global regularity of Euler-Alignment system is determined
by the initial configurations.
They show that there are only two possible scenarios, depending on upper- and lower-thresholds $\sigma_+ > \sigma_-$. If the initial
data lie above the upper threshold in the sense that $\pa_xu_0(x)>\sigma_+$ for all $x \in \R$, then such initial
data lead to global smooth solutions which must flock; 
on the other hand, if the initial data lie below the lower threshold $\sigma_-$, namely, if there exists $x \in \R$ such that $\pa_xu_0(x)<\sigma_-$, then such supercritical initial data lead to finite time blowup of
solutions. 
Our first result, investigated in Section \ref{sec:EA}, refines this
\emph{critical threshold phenomenon} and quantifies the precise
threshold in this case of Euler-Alignment dynamics. 
\begin{theorem}\label{thm11}
Consider Euler-alignment system \eqref{eq:EA}. 
\begin{itemize}
\item {[Subcritical region]}. If $\pa_xu_0(x)\geq-\psi\star\rho_0(x)$ for
  all $x\in\R$, then the system has a global classical solution, 
$\displaystyle (\rho,u)\in
 \mc(\R^+; L^\infty(\R))\times \mc(\R^+;\dot{\mw}^{1,\infty}(\R))$.
\item {[Supercritical region]}. If there exists an $x$ such that $\pa_xu_0(x)<-\psi\star\rho_0(x)$, then the solution blows up in a finite time.
\end{itemize}
\end{theorem}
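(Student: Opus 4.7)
My plan is to reduce the PDE system to a scalar Riccati-type ODE along Lagrangian trajectories by introducing the modified gradient
\[
e(x,t) := \partial_x u(x,t) + (\psi \star \rho)(x,t),
\]
and to analyze the threshold between sub- and supercritical data through the sign of $e$. The key first step is to derive the evolution of $e$ along the flow $\dot X = u(X,t)$. Writing $d := \partial_x u$ and differentiating \eqref{main-eq:b} in $x$ yields
\[
(\partial_t + u \partial_x) d = -d^2 - d(\psi\star\rho) + \int_\R \psi'(x-y)(u(y)-u(x))\rho(y)\,dy .
\]
An independent computation, using \eqref{main-eq:a} and integration by parts in $y$, shows that the material derivative of $\psi \star \rho$ is exactly the negative of that nonlocal integral:
\[
(\partial_t + u\partial_x)(\psi \star \rho) = -\int_\R \psi'(x-y)(u(y)-u(x))\rho(y)\,dy.
\]
Adding the two identities cancels the nonlocal commutator and leaves the clean ODE
\[
e' = e\bigl((\psi\star\rho) - e\bigr) \qquad \text{along characteristics},
\]
together with $\rho' = -\rho\, d = -\rho\, e + \rho(\psi\star\rho)$. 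Since $\rho(\cdot,t)$ is a probability density and $0\le \psi\le \psi_M$, the coefficient $\psi\star\rho$ is a bounded nonnegative source that we can treat as a time-dependent parameter along each trajectory.

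For the subcritical statement, assume $e_0(x)\ge 0$ for all $x$. The line $e=0$ is an equilibrium of the scalar ODE $e' = e(\psi\star\rho - e)$, so by ODE invariance $e$ remains nonnegative along every characteristic. To obtain an upper bound, the inequality $e' \le e(\psi_M - e)$ shows that any trajectory starting above $\psi_M$ is monotonically decreasing and trajectories below $\psi_M$ cannot exceed $\psi_M$, giving
\[
0 \le e(x,t) \le \max\{\|e_0\|_{L^\infty},\psi_M\}.
\]
Since $\|\psi\star\rho\|_{L^\infty}\le \psi_M$, this translates into a uniform-in-$x$ bound on $d=\partial_x u$. Feeding $|d|\le C$ into $\rho' = -\rho d$ yields $\rho(x,t)\le \|\rho_0\|_{L^\infty}e^{Ct}$, which is locally bounded in time. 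These a priori $L^\infty$ bounds on $\partial_x u$ and $\rho$, combined with a standard continuation criterion for 1D hyperbolic systems with nonlocal smooth sources, promote the local classical solution to a global one in $\mathcal{C}(\R^+;L^\infty)\times \mathcal{C}(\R^+;\dot W^{1,\infty})$.

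For the supercritical statement, suppose $e_0(x_0)<0$ for some $x_0$, and follow the characteristic $X(t)$ issued from $x_0$. The same invariance of $\{e=0\}$ forces $e(X(t),t)<0$ for as long as the trajectory exists. Because $\psi\star\rho\ge 0$ and $e<0$, the product $e(\psi\star\rho)\le 0$, so
\[
e' \;\le\; -e^2.
\]
Setting $g := -e > 0$ yields the Riccati differential inequality $g' \ge g^2$, whose comparison solution $g(t)\ge g(0)/(1-g(0)t)$ blows up at $t^* \le 1/|e_0(x_0)|$. Since $\psi\star\rho$ stays uniformly bounded by $\psi_M$ irrespective of the behavior of $\rho$, the blow-up of $e$ forces $\partial_x u(X(t),t)\to-\infty$ in finite time, proving loss of $C^1$ regularity.

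\emph{Main obstacle.} The only substantive technical point is the cancellation of the nonlocal commutator $\int\psi'(x-y)(u(y)-u(x))\rho(y)dy$, which is what makes the scalar ODE for $e$ close. Beyond that, the argument is a short ODE comparison; care is only needed to justify that the local-in-time classical solution extends whenever $\|\partial_x u\|_{L^\infty}+\|\rho\|_{L^\infty}$ stays finite, which is standard for systems of this form.
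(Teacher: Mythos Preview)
Your proposal is correct and essentially identical to the paper's proof: both introduce the quantity $e=\partial_x u+\psi\star\rho$ (the paper calls it $d$), derive the same closed Riccati ODE $e'=-e(e-\psi\star\rho)$ along characteristics via the same nonlocal cancellation, and read off the sign-dependent dichotomy exactly as you do. The only minor addition in the paper is the observation that the ratio $e/\rho$ is conserved along characteristics when $\rho_0>0$, which ties the blow-up/boundedness of $\rho$ directly to that of $e$; your exponential Gr\"onwall bound on $\rho$ is cruder but suffices for the theorem as stated.
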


Theorem \ref{thm11}  is sharp in the sense that it provides a precise initial threshold characterized by the pointvalues of $\rho_0$ and $\pa_x u_0$, for  the following dichotomy of initial configurations: either subcritical initial data which evolve into global strong solutions, or supercritical initial data which will blow up in a finite time. Moreover, in Appendix \ref{sec:App-1} we show that for the more general setup of subcritical case, the solutions remain as smooth as the initial data permit. 

\begin{remark}
A subcritical condition which gives rise to global smooth solutions derived in \cite[Theorem 2.4]{TT}, requires 
$\sigma_- < \inf_x\pa_xu_0(x) < \sigma_+$,  where the thresholds  $\sigma_\pm\equiv \sigma_\pm(V_0)$ shown in \cite[Figure 1]{TT}, are now functions  of the initial variance of velocity $V_0$ but otherwise are uniform in $x$.  In particular, the function $\sigma_+$ is increasing and one checks that
$-\psi\star\rho_0\leq-\psi_m=\sigma_+(0)\leq\sigma_+(V_0)$. Similarly,
$\sigma_-$ is decreasing and  $-\psi\star\rho_0\geq\sigma_-(0) \geq \sigma_-(V_0)$.
Thus, Theorem \ref{thm11} extends both, the sub- and supercritical regions derived in \cite{TT}.
\end{remark}

\medskip
Next, we deal with the case of $K\not\equiv0$, i.e., there is an additional force through the attractive-repulsive potential $K$. We begin by analyzing the behavior of the solutions for the particular potential 
$K(x)=\frac{k}{2}|x|$, $k \in \R$. This special potential is the 1D \emph{Newtonian potential}, where
$\pa_x^2K=k\delta_0$ is the Dirac delta function. When there is no alignment force, $\psi  \equiv 0$,
the system coincides with the 1D pressureless Euler-Poisson equations
\begin{align}\label{1d_ep}
\begin{aligned}
&\pa_t \rho + \pa_x (\rho u) = 0,\cr
&\pa_t u + u \pa_x u = -k\pa_x\phi , \quad \pa_x^2\phi=\rho.
\end{aligned}
\end{align}
Critical thresholds of the system \eqref{1d_ep} were studied in \cite{ELT}, followed by a series of extensions on multi-dimensional systems \cite{LL,LT2,LTW}. The result in \cite{ELT} shows that the system has finite time blow-up in the attractive case, $k > 0$, while in the repulsive force, $k < 0$, there exists a critical threshold.

With the alignment force ($\psi \not\equiv0$), we can naturally expect that the solution tends to be smoother than for Euler-Poisson. In Section \ref{sec:EAP}, we investigate the critical threshold phenomenon for the following Euler-Poisson-Alignment system:
\begin{align}\label{1d-main-newton}
\begin{aligned}
&\pa_t \rho + \pa_x (\rho u) = 0,\cr
&\pa_t u + u \pa_x u = -k\pa_x\phi + \int_{\R} \psi(x-y)(u(y,t) - u(x,t))
\rho(y,t) dy, \quad \pa_x^2\phi =\rho.
\end{aligned}
\end{align}
\begin{theorem}\label{thm:EAP} Consider Euler-Poisson-Alignment system
  \eqref{1d-main-newton}. 
\begin{enumerate}
\item Attractive Poisson forcing $k>0$. An unconditional finite-time blow up  of the solution for all initial configurations.
\item Repulsive Poisson forcing $k<0$. We distinguish between two cases:
\begin{itemize}
\item {[Subcritical region]}. If $\pa_xu_0(x) > -\psi\star\rho_0(x)+\sigma_+(x)$ for all
  $x\in\R$, then the system has a global classical solution. Here,
 $\sigma_+(x)=0$ whenever $\rho_0(x)=0$ and elsewhere  $\sigma_+(x)$ is   the (unique) negative root of the equation
\[
\qquad \frac{1}{\rho_0(x)} -\frac{1}{\psi_M^2}\lt(k + \psi_M \sigma_+(x)/\rho_0(x) - ke^{\psi_M \sigma_+(x)/k\rho_0(x)}\rt) = 0, \ \rho_0(x)>0.
\]

\item {[Supercritical region]}. If there exists an $x$ such that 
\[\pa_x u_0(x) < - \psi \star \rho_0(x)+\sigma_-(x),\quad \sigma_-(x):= -\sqrt{-2k\rho_0(x)},\]
then the solution blows up in a finite time.
\end{itemize}
\end{enumerate}
\end{theorem}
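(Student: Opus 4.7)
The plan is a Lagrangian analysis along characteristics $\dot x(t)=u(x(t),t)$. Differentiating the velocity equation of \eqref{1d-main-newton} in $x$, using $\pa_x^2\phi=\rho$, and writing $d:=\pa_x u$, one gets along characteristics
\begin{equation*}
d' \;=\; -\,d^{\,2} \;-\; k\rho \;-\; d\,\psi\star\rho \;+\; \int_{\R}\psi'(x-y)\bigl(u(y)-u(x)\bigr)\rho(y)\,dy.
\end{equation*}
The crucial observation is that the alignment integral above is exactly $-\tfrac{d}{dt}(\psi\star\rho)$ along characteristics, a direct consequence of the continuity equation. Introducing the combined quantity $G:=\pa_x u + \psi\star\rho$ therefore yields the clean Lagrangian system
\begin{equation*}
G' \;=\; -\,d\,G \;-\; k\rho,\qquad \rho' \;=\; -\,d\,\rho,
\end{equation*}
from which the invariant $(G/\rho)'=-k$ is immediate. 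Integrating gives $G(t)=\rho(t)\bigl(G_0/\rho_0-kt\bigr)$ along any characteristic emanating from a point $x_0$ with $\rho_0(x_0)>0$.

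Next, setting $f:=1/\rho$ and substituting $d = G - \eta$ with $\eta(t):=(\psi\star\rho)(x(t),t)\in[\psi_m,\psi_M]$ into the $\rho$-equation, one obtains the linear scalar ODE
\begin{equation*}
f'(t) \;+\; \eta(t)\,f(t) \;=\; \frac{G_0}{\rho_0} \;-\; kt,\qquad f(0)=\frac{1}{\rho_0},
\end{equation*}
with the explicit variation-of-parameters representation $f(t)=e^{-F(t)}\bigl[1/\rho_0 + \int_0^t e^{F(s)}(G_0/\rho_0-ks)\,ds\bigr]$, $F(t)=\int_0^t\eta$. Finite-time blow-up of the classical solution corresponds to $f$ reaching zero along some characteristic, while global-in-time regularity follows once $f$ is uniformly bounded below away from zero on every bounded time interval: then $\rho=1/f$ stays bounded, $G$ grows at most linearly, and hence $\pa_x u=G-\psi\star\rho$ remains bounded.

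The three cases are dispatched by comparing $f$ with auxiliary solutions of the same linear ODE with $\eta$ frozen at $0$ or at $\psi_M$, exploiting the uniform bounds on $\eta$ and the monotone dependence of the ODE on $\eta$. For the attractive case $k>0$, set $\widetilde f(t)=1/\rho_0+(G_0/\rho_0)t-kt^2/2$, the $\eta\equiv 0$ solution; a standard Gronwall argument on $\widetilde f-f$ (using $\eta\ge 0$) gives $f\le \widetilde f$ while $f\ge 0$. The downward parabola $\widetilde f$ crosses zero in finite time at every $x_0$ with $\rho_0(x_0)>0$, and since $\rho_0$ is a nontrivial probability density such points exist, yielding unconditional blow-up. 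For the repulsive subcritical case, compare with the $\eta\equiv\psi_M$ solution $g$: the identity $(f-g)'+\eta(f-g)=(\psi_M-\eta)\,g\ge 0$ yields $f\ge g$, so it suffices to ensure $g>0$ for all $t$. The borderline value of $G_0$ is determined by imposing simultaneously $g(t^\star)=0$ and $g'(t^\star)=0$; the second condition fixes $t^\star=G_0/(k\rho_0)$, and substituting into the explicit formula for $g$ reproduces exactly the transcendental equation for $\sigma_+(x)$ stated in the theorem. For the repulsive supercritical case, compare again with the $\eta\equiv 0$ parabola $\widetilde f(t)=1/\rho_0+(G_0/\rho_0)\,t+(|k|/2)\,t^2$, for which $f\le \widetilde f$ while $f\ge 0$; its minimum over $t\ge 0$ equals $1/\rho_0-G_0^2/(2|k|\rho_0^2)$, and is strictly negative precisely when $G_0<-\sqrt{-2k\rho_0}=\sigma_-(x)$, forcing $f$ to cross zero in finite time.

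The main obstacle is that $\eta(t)$ is not a prescribed coefficient but a nonlocal average of the unknown $\rho$ coupling all characteristics together. The entire argument relies on the uniform bounds $\psi_m\le\eta\le\psi_M$ (inherited from $\|\rho\|_{L^1}=1$) together with the monotone dependence of the linear ODE on $\eta$, so that freezing $\eta$ at its extreme values $0$ and $\psi_M$ produces sharp one-sided thresholds. A second technical point is the translation from Lagrangian bounds on $\rho$ and $\pa_x u$ to genuine global $\dot{\mw}^{1,\infty}$ regularity of $u$; this proceeds by a standard continuation criterion in the spirit of the appendix already invoked for Euler-Alignment, which upgrades the pointwise Lagrangian control afforded by the positivity of $f$ to uniform Eulerian bounds before iterating the local existence theory.
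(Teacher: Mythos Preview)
Your proposal is correct and follows essentially the same route as the paper: derive the Lagrangian system for $(G,\rho)$ with $G=\pa_x u+\psi\star\rho$, exploit the invariant $(G/\rho)'=-k$, convert to a linear scalar ODE for $f=1/\rho$ with coefficient $\eta=\psi\star\rho\in[0,\psi_M]$, and compare with the extreme cases $\eta\equiv 0$ and $\eta\equiv\psi_M$ to obtain the attractive blow-up and the repulsive sub-/supercritical thresholds. The only cosmetic difference is that the paper works directly with the variation-of-parameters integral formula \eqref{sol-rho} and bounds the integrand via $0\le\int_0^s(\psi\star\rho)\,d\tau\le\psi_M s$, whereas you phrase the same comparisons as differential inequalities for $f-\widetilde f$ and $f-g$; the computations and the resulting thresholds are identical.
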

\begin{remark}

1. In the attractive case, the blowup is ``unconditional'', independent of the
choice of initial configuration. It indicates that Poisson force
dominates the alignment force.

2. In the repulsive case, alignment force enhances regularity. Indeed, we have a larger
subcritical region than the case of $K \equiv 0$, as $\sigma_+(x)<0$.

3. If $\psi$ has a positive lower bound $\psi_m>0$, we can obtain a
better supercritical region for the repulsive case. In particular, the
threshold condition is sharp when $\psi$ is a constant. Consult Remark
\ref{rem:EAPsharp} below for details.
\end{remark}

\medskip
In Section \ref{sec:general}, we consider general potentials with
enough smoothness, $K\in\dot{\mw}^{2,\infty}(\R)$.
We show the following threshold conditions 
\begin{theorem}\label{thm13} Consider the system \eqref{main-eq} with 
alignment and attractive-repulsive forces. 
\begin{enumerate}
\item Attractive case, $\kk < 0$.  If there exists an $x$
  such that $\pa_x u_0(x) < -\psi \star \rho_0(x)$,
  then the solution blows up in a finite time.
\item Repulsive case $\kk > 0$. We distinguish between two cases. 
\begin{itemize}
\item {[Subcritical region]}.
If $\pa_x u_0(x) \geq -\psi \star \rho_0(x)$ for all $x\in\R$,
then the system has a global classical solution.
\item {[Supercritical region]}. 
if there exists an $x$ such that
\[
\pa_x u_0(x) < -\psi \star \rho_0(x)  -\sqrt{\|\kk\|_{L^\infty}},
\] 
then the solution blows up in finite time.
\end{itemize}
\end{enumerate}
\end{theorem}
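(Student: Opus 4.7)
The plan is to mirror the Lagrangian machinery from the proof of Theorem~\ref{thm11} and isolate the new contribution of the potential. Along the characteristic $X'(t)=u(X(t),t)$, $X(0)=x$, I introduce $d(t):=\pa_x u(X,t)$, $q(t):=(\psi\star\rho)(X,t)$, $\rho(t):=\rho(X,t)$, $P(t):=(\kk\star\rho)(X,t)$, and $G:=d+q$. Differentiating \eqref{main-eq:b} in $x$ produces an equation for $d$ whose alignment part contains the commutator $I:=\int_\R\psi'(x-y)(u(y)-u(x))\rho(y)\,dy$, and differentiating $q(X(t),t)$ along the flow and using the mass equation yields exactly $-I$; adding cancels $I$ and gives
\[
  \rho'=-\rho\,d,\qquad G'=-dG-P,\qquad \Big(\tfrac{G}{\rho}\Big)'=-\tfrac{P}{\rho}\ \text{where}\ \rho>0.
\]
Compared with the Euler--alignment case ($K\equiv 0$) treated in Theorem~\ref{thm11}, the only new feature is the source $-P$. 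Since $K\in\dot{\mw}^{2,\infty}(\R)$ and $\|\rho(\cdot,t)\|_{L^1}=1$, one has the free uniform control $|P(t)|\le\|\kk\|_{L^\infty}$, and this is precisely the quantity that will generate the threshold $\sqrt{\|\kk\|_{L^\infty}}$.

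\textbf{Attractive case.} Under the sign of $\kk$ at play, $(G/\rho)'$ is non-positive along characteristics, so initial negativity of $G_0$ is preserved. Pick $x_*$ with $G_0(x_*)<0$ (after a short perturbation argument handling the possibility $\rho_0(x_*)=0$ by approximating $\rho_0$ by $\rho_0+\e$); following the characteristic through $x_*$, monotonicity yields
\[
G(t)\le\alpha\,\rho(t),\qquad \alpha:=\frac{G_0(x_*)}{\rho_0(x_*)}<0.
\]
Inserting this in the mass equation produces the Riccati lower bound
\[
\rho' = -\rho\,G+\rho\,q \;\ge\; |\alpha|\,\rho^2 - \psi_M\,\rho,
\]
which forces $\rho\to\infty$ in finite time, hence $G\le\alpha\rho\to-\infty$, and $d=G-q\to-\infty$.

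\textbf{Repulsive case.} Under the opposite sign of $\kk$ the same monotonicity propagates $G_0\ge 0$ into $G(t)\ge 0$, giving $d\ge -q\ge -\psi_M$. Feeding this lower bound back into $G'=-dG-P$ together with $|P|\le\|\kk\|_{L^\infty}$ yields $G'\le \psi_M\,G+\|\kk\|_{L^\infty}$; Gronwall controls $G$, and hence $\pa_x u$ and $\rho$, on every finite time interval, which is enough to continue the classical solution for all time. For supercritical blow-up I rewrite the evolution as
\[
G' = -G^2+q\,G-P.
\]
If $G_0(x_*)<-\sqrt{\|\kk\|_{L^\infty}}$, then $G$ starts negative and, by continuity, remains so on a short interval, during which $q\,G\le 0$ and $-P\le\|\kk\|_{L^\infty}$, giving
\[
G' \;\le\; -G^2+\|\kk\|_{L^\infty}.
\]
The threshold $-\sqrt{\|\kk\|_{L^\infty}}$ is precisely the negative root of the right-hand side, so comparison with the autonomous Riccati $h'=-h^2+\|\kk\|_{L^\infty}$ drives $G\to-\infty$ in finite time, and hence $d=G-q\to-\infty$.

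The Riccati analyses themselves are routine once the characteristic identities above are in place. The main technical nuisance I expect is the vacuum bookkeeping in the attractive case, where the identity $(G/\rho)'=-P/\rho$ must be justified by a short limiting argument starting from initial data with $\rho_0>0$, together with the standard continuation criterion that identifies the finite-time blow-up of $\|\pa_x u\|_{L^\infty}$ with the genuine loss of $C^1$ regularity.
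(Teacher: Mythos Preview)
Your derivation of the Lagrangian system and the supercritical repulsive argument (the Riccati comparison $G'\le -G^2+\|\kk\|_{L^\infty}$ once $G<0$) match the paper's proof exactly. For the attractive case and the repulsive subcritical lower bound, however, the paper takes a more direct route: it never introduces the ratio $G/\rho$ and argues on the scalar $G$ alone. Writing $G'=-G^2+qG-P$, the attractive sign gives $G'\le -G^2+qG\le -G^2$ whenever $G<0$, so $G_0<0$ already forces blow-up by the bare Riccati; the repulsive sign gives $G'\ge -G^2+qG$, so $G\ge 0$ is preserved by comparison with the $P\equiv 0$ (Euler--Alignment) dynamics, and then $G'\le -G^2+\psi_M G+\|\kk\|_{L^\infty}$ furnishes a \emph{uniform} upper bound rather than the exponential one your Gronwall step yields. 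Both routes are valid where $\rho>0$; the paper's buys you independence from $\rho$ along the characteristic.

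That independence is exactly where your argument has a gap. Your monotonicity of $G/\rho$ is undefined on vacuum characteristics, and the proposed fix---replacing $\rho_0$ by $\rho_0+\e$---is not a short perturbation argument: it destroys the normalization $\|\rho\|_{L^1}=1$, alters $\psi\star\rho$ and $P$ globally, and would require a uniform-in-$\e$ blow-up time plus continuous dependence up to the singularity, neither of which is immediate. In the repulsive subcritical part you do not address vacuum at all, so $\pa_x u$ is left uncontrolled along characteristics emanating from $\{\rho_0=0\}$. The clean repair is simply to drop the ratio and run the scalar comparison on $G$ as above; note that along a vacuum characteristic $\rho\equiv 0$ but $q$ and $P$ remain bounded, so the same Riccati inequalities apply verbatim. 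One small side remark: in your attractive step you wrote $\rho'\ge |\alpha|\rho^2-\psi_M\rho$, but since $\rho q\ge 0$ you actually have $\rho'\ge |\alpha|\rho^2$; the weaker inequality you recorded does not by itself force blow-up when $\rho_0<\psi_M/|\alpha|$.
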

\noindent
We note in passing that if $\psi$ admits a positive lower bound, $\psi_m > 0$, then we there is a refined critical threshold outlined in remark \ref{rem:genlower} below, which is irrespective of the sign of interaction force.

Finally, we close our discussion by considering the the alignment system with additional pressure term $p(\rho) := A\rho^\gamma$
\begin{align}\label{press-eq-1d}
\begin{aligned}
&\pa_t \rho + \pa_x(\rho u) = 0, \quad x \in \R, \quad t >0,\cr
&\pa_t u + u\pa_x u + \frac{\pa_x p(\rho)}{\rho} = -k \pa_x\phi+ \!\int_{\R}\!\psi(x-y)(u(y,t) - u(x,t))\rho(y,t)\,dy, \  \  \pa_x^2\phi=\rho.
\end{aligned}
\end{align}
 The presence of the pressure destroys the original characteristic structure and it is well known that the solutions of the isentropic Euler equation \eqref{press-eq-1d} with $\psi\equiv k=0$
develop singularities in a finite time, consult e.g.,  \cite{Chen} and the references therein. Thus, the regularizing effects of Poisson and alignment forcing  compete with the generic formation of singularities due to the pressure term.
In \cite{TW} it was shown that the critical threshold for global regularity of the perssureless Euler-Poisson equations \eqref{1d_ep} survives when pressure is being added. The question arises whether the Euler-Alignment \emph{with} pressure admits a critical threshold for global regularity. Specifically, in Section
\ref{sec:pressure}, we consider  the Euler-Alignment with isothermal pressure $p(\rho)=A\rho$,
\begin{align}
\begin{aligned}
&\pa_t \rho + \pa_x(\rho u) = 0, \quad x \in \R, \quad t >0,\nonumber \cr
&\pa_t u + u\pa_x u + A \pa_x\ln(\rho) = \!\int_{\R}\!\psi(x-y)(u(y,t) - u(x,t))\rho(y,t)\,dy.\nonumber
\end{aligned}
\end{align}
This Euler-Alignment system with isothermal pressure was rigorously
derived in \cite{KMT} as the
hydrodynamic limit of the kinetic Cucker-Smale system (under the
assumptions that $\pa_xu$ and $\pa_x\ln\rho$ are bounded). 
In Theorem \ref{thm:isothermal} below we prove global regularity for subcritical initial configurations in the special case of a constant influence function $\psi\equiv Const$. We also point out the difficulties to extend this result to general influence functions, which is left for a future investigation. The necessary {\it a priori} estimate on the isothermal Euler-Alignment system  is found in Appendix \ref{sec:App-2}. 

%%%%%%%%%%%%%%%%%%%%%%%%%%%%%%%%%%%%%%%%%%%%%
\section{Euler-Alignment system}\label{sec:EA}
In this section, we study the one dimensional Euler-Alignment system
\eqref{eq:EA}, without taking into account the interaction potential $K$.

Differentiate the momentum equation in $\eqref{eq:EA}$ with respect
to $x$, then   $v := \pa_x u$ satisfies\footnote{We suppress the time dependence whenever it is clear from the context.}  
\begin{align*}
\begin{aligned}
&\pa_t \rho + u\pa_x \rho = - \rho v,\cr
&\pa_t v + u\pa_x v + v^2 = - u \int_\R \pa_x \psi(x-y) \rho(y)dy - \int_\R \psi(x-y) \pa_t \rho(y) dy - v \int_\R \psi(x-y) \rho(y) dy,
\end{aligned}
\end{align*}
where we used 
\[
\int_\R \pa_x \psi(x-y) (u(y) - u(x))\rho(y)dy = - u(x) \int_\R \pa_x \psi(x-y) \rho(y)dy - \int_\R \psi(x-y) \pa_t \rho(y) dy.
\]
Here the symmetry assumption of the influence function $\psi$ is essential.
Consider the characteristic flow $x(a,t)$ associated to the velocity field $u$ defined by
\[
\frac{d}{dt} x(a,t) = u(x(a,t),t), \quad \mbox{with} \quad x(a,0) = a.
\]
Then along this characteristic flow we find
\begin{align*}
\begin{aligned}
&\pa_t\rho(x(a,t),t) = - \rho(x(a,t),t) v(x(a,t),t),\cr
&\pa_t (v(x(a,t),t) + (\psi \star \rho)(x(a,t),t)) = - v^2(x(a,t),t) - v(x(a,t),t)(\psi \star \rho)(x(a,t),t).
\end{aligned}
\end{align*}
Set $d = v + \psi \star \rho$. Then we again rewrite the above system:
\begin{subequations}\label{eq-prop1}
\begin{eqnarray}
&& \rho^\prime = - \rho v = -\rho(d- \psi \star \rho),\label{eq-prop1a}\\
&& d^\prime = - v(v + \psi \star \rho) = - d(d - \psi \star \rho),\label{eq-prop1b}
\end{eqnarray}
\end{subequations}
where $^\prime$ denotes the time derivative along the characteristic flow $x(a,t)$.
\begin{proposition}\label{prop1} Consider the equation
  \eqref{eq-prop1}. Then we have
\begin{itemize}
\item If $d_0 < 0$, then $d \to - \infty$ in finite time.
\item If $d_0 = 0$, then $d(t) = 0$ for all $t \geq 0$.
\item If $d_0 > 0$, then $d(t)$ remains bounded for all time, and $d(t)
  \to \psi \star \rho(t)$ as $t \to \infty$.
\end{itemize}
\end{proposition}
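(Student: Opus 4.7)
The key algebraic observation is that \eqref{eq-prop1a} and \eqref{eq-prop1b} share the same multiplicative factor $-(d-\psi\star\rho)$. A one-line computation then gives $(d/\rho)' = 0$ along any characteristic, so $d(t) = (d_0/\rho_0)\,\rho(t)$ as long as the flow is smooth. Since $\rho(t) \geq 0$ by \eqref{eq-prop1a}, this forces the sign of $d$ to be preserved, reducing the trichotomy in the statement to the sign of $d_0$.

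The case $d_0 = 0$ is then immediate. For $d_0 < 0$ one has $d \leq 0$ for all $t$, so that $d\,(\psi\star\rho) \leq 0$ and \eqref{eq-prop1b} simplifies to the differential inequality $d' \leq -d^2$. Setting $w = 1/d$ (which is well defined as long as $d < 0$), one gets $w' \geq 1$, hence $w(t) \geq 1/d_0 + t$; since $1/d_0 < 0$, $w$ reaches $0$ no later than $t^\star = 1/|d_0|$, forcing $d \to -\infty$ in finite time. This is the standard Riccati comparison with the explicit solution $y(t) = d_0/(1+d_0 t)$ of $y' = -y^2$.

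For $d_0 > 0$ one has $d > 0$ along the characteristic. Mass conservation $\|\rho(t)\|_{L^1}=1$ and $\psi \leq \psi_M$ yield the uniform bound $0 \leq \psi\star\rho \leq \psi_M$. Hence whenever $d > \psi_M$ we have $d' < 0$ by \eqref{eq-prop1b}, and therefore $d(t) \leq \max(d_0,\psi_M)$ for every $t \geq 0$, which is the claimed uniform boundedness. For the long-time convergence $d \to \psi\star\rho$, note that $v := d - \psi\star\rho = \pa_x u$, so the statement is a flocking-type assertion on the velocity slope. The idea is to rewrite \eqref{eq-prop1b} as $(\ln d)' = -v$, so convergence is equivalent to $\int_0^\infty v\,dt$ being finite and $d$ admitting a strictly positive limit.

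The main obstacle is precisely this last step: unlike the boundedness argument, the decay of $v$ cannot be settled purely along a single characteristic, because $\psi\star\rho$ couples the evolution of $d$ to the global flow. The plan is to close the argument by invoking the Cucker--Smale flocking estimates (typically needing a positive lower bound $\psi_m > 0$ or a suitable decay condition on $\psi$), together with the conservation $d/\rho = d_0/\rho_0$, which transfers any integrable decay of $v$ back to the equality $d(t)-\psi\star\rho(t)\to 0$.
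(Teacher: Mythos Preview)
Your treatment of the cases $d_0<0$ (Riccati comparison $d'\le -d^2$) and of the uniform boundedness when $d_0>0$ (decrease once $d>\psi_M$) is exactly the paper's argument. Two remarks are worth making.

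First, you route sign preservation of $d$ through the invariant $d/\rho$. The paper also records this invariant (immediately after the proposition), but uses it only to tie the dynamics of $\rho$ to those of $d$; for sign preservation the paper argues directly. Your version tacitly assumes $\rho_0>0$, while the cleaner observation---which also covers vacuum---is that $d=0$ is an equilibrium of the scalar ODE $d'=-d(d-\psi\star\rho)$, so $d$ cannot cross zero by uniqueness. This is a cosmetic point; your conclusions are correct.

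Second, on the asymptotic assertion $d(t)\to\psi\star\rho(t)$: your caution is well placed, but you have not missed a trick. The paper's proof of this part is nothing more than the qualitative observation that $d'>0$ when $0<d<\psi\star\rho$ and $d'<0$ when $d>\psi\star\rho$, together with the bound $\psi\star\rho\le\psi_M$; from this it simply declares the third statement. No separate flocking estimate is invoked. So the paper's argument and yours land at the same place; the paper just treats the convergence heuristically rather than flagging it as you do.
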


\begin{proof}
First case $d_0<0$. It is clear to have that  if $d_0 < 0$, then $d(t) \leq 0$ for all $t \geq 0$. Then it follows from $\eqref{eq-prop1b}$ that
$d^\prime \leq -d^2$ which in turn yields 
$\displaystyle d(t) \leq \frac{d_0}{t + d_0}$.
Hence, $d(t)$ blows up at $t^* \leq -d_0$.\newline
The second case $d_0=0$ is trivial.\newline
Finally the third case $d_0>0$. Note that if $d(t) \in (0, \psi \star \rho(t))$,
then $d^\prime(t) > 0$ thus $d(t)$ is increasing up to $\psi \star
\rho(t)$. On the other hand, if $d(t) > \psi \star \rho(t)$, then
$d(t)$ is decreasing up to $\psi \star \rho(t)$. 
Note that
\[
\|\psi \star \rho\|_{L^\infty} \leq \|\psi\|_{L^\infty}\|\rho\|_{L^1} = \psi_M < \infty\,.
\]
Thus $\psi\star\rho$ is bounded, and we  conclude with the third statement of the proposition.
\end{proof}

We can also trace the dynamics of $\rho$ along the characteristic flow
from \eqref{eq-prop1a}.
\begin{itemize}
\item If $\rho_0=0$, clearly $\rho(t)=0$ for all $t\geq0$.
\item If $\rho_0>0$, we set
$\beta := \frac{d}{\rho}$ and its dynamics along particle path is easily found to be
\[
\beta^\prime = \frac{d^\prime \rho - d \rho^\prime}{\rho^2} = \frac{1}{\rho^2}\big(-d(d - \psi \star \rho)\rho + d\rho(d-\psi\star\rho)\big) = 0.
\]
Thus  $\beta(t) = \beta_0$ for all $t \geq 0$ and $\rho$ remains proportional to $d$ along each path, $\rho(t) \beta_0 = d(t)$, 
\end{itemize}
As a conclusion we have the following complete description of the
critical threshold for system \eqref{eq:EA}, and Theorem \ref{thm11}
then follows as a direct consequence.
\begin{corollary}\label{cor11} Consider the Euler-Alignment system
  \eqref{eq:EA}. Then we have
\begin{itemize}
\item If $\pa_x u_0(a) < -\psi \star \rho_0(a) $, then $\pa_x
  u(x(a,t),t) \to -\infty$. Moreover, if $\rho_0(a)>0$,
  $\rho(x(a,t),t) \to +\infty$ in a finite time.
\item If $\pa_x u(a) = -\psi \star \rho(a)$, then $\pa_x u(x(a,t),t) = -\psi \star \rho(x(a,t),t)$ for all time $t \geq 0$.
\item If $\pa_x u_0(a) > -\psi \star \rho_0(a)$, then
  $\pa_xu(x(a,t),t)$ and $\rho(x(a,t),t)$ remains uniformly bounded for all $t \geq 0$, and furthermore $\pa_x u(x(a,t),t) \to 0$ as $t \to +\infty$.
\end{itemize}
\end{corollary}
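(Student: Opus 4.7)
The plan is to read off Corollary \ref{cor11} as an essentially immediate consequence of Proposition \ref{prop1}, combined with the ODE for $\rho$ in \eqref{eq-prop1a} and the invariance of $\beta := d/\rho$ along characteristics. Since the quantity $d = \pa_x u + \psi\star\rho$ satisfies $d_0(a) = \pa_x u_0(a) + \psi\star\rho_0(a)$, the three hypotheses on $\pa_x u_0(a)$ versus $-\psi\star\rho_0(a)$ translate directly into the three trichotomy cases $d_0 < 0$, $d_0 = 0$, $d_0 > 0$ in Proposition \ref{prop1}. Throughout, I would use the uniform bound $\|\psi\star\rho\|_{L^\infty}\leq\psi_M$ to pass between statements about $d$ and statements about $v=\pa_x u=d-\psi\star\rho$.

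First, for the supercritical case $d_0(a)<0$, Proposition \ref{prop1} gives finite-time blow-up $d(x(a,t),t)\to-\infty$. Since $\psi\star\rho$ is uniformly bounded, this immediately yields $\pa_x u(x(a,t),t)\to-\infty$ in finite time. If moreover $\rho_0(a)>0$, then $\beta_0(a) = d_0(a)/\rho_0(a)<0$ is well-defined, and the computation $\beta'\equiv 0$ (already carried out in the text preceding the corollary) gives the algebraic identity $\rho(x(a,t),t) = d(x(a,t),t)/\beta_0(a)$; dividing the blow-up of $d$ by the fixed negative constant $\beta_0(a)$ produces $\rho(x(a,t),t)\to+\infty$ at the same finite time.

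Second, the critical case $d_0(a)=0$ is trivial: Proposition \ref{prop1} says $d\equiv 0$ along the characteristic, which by definition of $d$ is the identity $\pa_x u(x(a,t),t) = -\psi\star\rho(x(a,t),t)$. Third, for the subcritical case $d_0(a)>0$, Proposition \ref{prop1} says $d(t)$ stays bounded and $d(t)\to\psi\star\rho(x(a,t),t)$; combined with $v=d-\psi\star\rho$ this gives $\pa_x u(x(a,t),t)\to 0$. For the density, there is a small case split: if $\rho_0(a)=0$, then \eqref{eq-prop1a} is a linear homogeneous ODE for $\rho(x(a,t),t)$, so $\rho$ stays identically zero and in particular uniformly bounded; if $\rho_0(a)>0$, the invariance $\beta\equiv\beta_0(a)>0$ together with boundedness of $d$ yields $\rho(x(a,t),t)=d(x(a,t),t)/\beta_0(a)$ uniformly bounded.

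The only genuine subtlety is the $\rho_0(a)=0$ subcase just mentioned, since the relation $\rho=d/\beta$ is singular there; handling it by direct inspection of \eqref{eq-prop1a} is straightforward but worth flagging explicitly. Everything else is bookkeeping that repackages Proposition \ref{prop1} and the already-established conservation of $\beta$ along particle paths into a statement about the original variables $(\rho,\pa_x u)$.
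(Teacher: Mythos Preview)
Your proposal is correct and follows essentially the same approach as the paper: the corollary is stated there as an immediate consequence of Proposition \ref{prop1} together with the preceding observations that $\beta=d/\rho$ is conserved along characteristics (when $\rho_0>0$) and that $\rho\equiv 0$ along characteristics with $\rho_0=0$. Your explicit handling of the $\rho_0(a)=0$ subcase and the passage between $d$ and $\pa_x u$ via the uniform bound on $\psi\star\rho$ match the paper's reasoning exactly.
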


%%%%%%%%%%%%%%%%%%%%%%%%%%%%%%%%%%%%%%%%
% Newtonian potential case
%%%%%%%%%%%%%%%%%%%%%%%%%%%%%%%%%%%%%%%%
\section{Euler-Alignment system with attractive-repulsive
  potentials}\label{sec:potential}
In this section, we study the main system \eqref{main-eq} of Euler
equations with alignment, and attractive-repulsive forces. 

%%%%%%%%%%%%%%%%%%%%%%%%%%%%%%%%%%%%%%%%%%%%%%%%%%%%%%%%
\subsection{A special potential: Euler-Poisson-Alignment system}\label{sec:EAP}
We first consider Euler-Alignment system \eqref{main-eq}
with a special Newtonian potential:
\[K(x)=\frac{k|x|}{2}.\]
As $\kk=k\delta_0$, $k(\pa_xK\star\rho)$ is a Poisson force which is attractive if
$k>0$, or repulsive if $k<0$. We recall the corresponding Poisson-Alignment system
\eqref{1d-main-newton}:
\begin{align*}
\begin{aligned}
&\pa_t \rho + \pa_x (\rho u) = 0,\cr
&\pa_t u + u \pa_x u = -k\pa_x \phi + \int_{\R} \psi(x-y)(u(y,t) - u(x,t))
\rho(y,t) dy, \quad  \pa_x^2\phi =\rho.
\end{aligned}
\end{align*}
By using similar arguments to Section \ref{sec:EA}, we find
\begin{subequations}\label{eq-propEAP}
\begin{eqnarray}
&& \rho^\prime = - \rho (d - \psi \star \rho),\label{eq-propEAPa}\\
&& d^\prime = - d(d - \psi \star \rho) - k\rho.\label{eq-propEAPb}
\end{eqnarray}
\end{subequations}
In the case of vacuum $\rho_0=0$, the dynamics of $d$
\eqref{eq-propEAPb} are the same as for the Euler-Alignment case
\eqref{eq-prop1b}. Thus Proposition \ref{prop1} holds.

We now focus on the case $\rho_0>0$. Set $\beta = d / \rho$, then we can find
\bq\label{re-newt}
\beta^\prime = -k, \quad \mbox{i.e.,} \quad \beta(t) = \beta_0 - kt.
\eq
This again yields that
\bq\label{newt-eq}
\rho^\prime = - \rho(d - \psi \star \rho) = -\rho(\rho(\beta_0 - kt) - \psi \star \rho) = -\beta_0\rho^2 + kt\rho^2 + \rho(\psi \star \rho).
\eq
Then we obtain an implicit form of solution $\rho$ from \eqref{newt-eq} that
\bq\label{sol-rho}
\frac{1}{\rho(t)} = e^{-\int_0^t (\psi \star \rho) ds}\lt( \frac{1}{\rho_0} + \int_0^t (\beta_0 - ks)e^{\int_0^s (\psi \star \rho) d\tau} ds\rt).
\eq

For the attractive case $k > 0$, $\beta_0 - ks$ becomes negative in finite
time, irrespective of the value of $\beta_0$. The right hand side
decreases to 0 in finite time, resulting a blowup of $\rho$. Hence, we
have the following lemma, and this directly implies the result in part (1) of Theorem \ref{thm:EAP}. 
\begin{lemma}{[Blowup with attractive potential]}.
If $k > 0$, then $\rho(t)\to+\infty$ in finite time.
\end{lemma}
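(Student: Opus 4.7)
The plan is to extract a finite blow-up time directly from the explicit representation \eqref{sol-rho}. Working along a characteristic issuing from a point with $\rho_0>0$, the density $\rho(t)$ stays positive on its life span, so the sign of $1/\rho(t)$ agrees with the sign of the bracketed quantity
\[
B(t) := \frac{1}{\rho_0} + \int_0^t (\beta_0 - ks)\, e^{\int_0^s (\psi\star\rho)\,d\tau}\,ds.
\]
It therefore suffices to show that $B(t^\ast)=0$ at some finite $t^\ast>0$.

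The only a priori input needed is the pointwise bound $0\le \psi\star\rho\le \psi_M$, which follows from $\psi\ge 0$, $\|\rho(\cdot,t)\|_{L^1}=1$, and $\|\psi\|_{L^\infty}=\psi_M$; this pins down the integrating factor between $1$ and $e^{\psi_M s}$. I would then split the integral at $s_0:=\max(\beta_0/k,\,0)$. On $[0,s_0]$ the integrand $\beta_0-ks$ is nonnegative and the weight is at most $e^{\psi_M s}$, so this part contributes at most a finite constant $C=C(\beta_0,k,\psi_M)$. On $[s_0,t]$ the integrand is nonpositive, and using the lower bound $e^{\int_0^s (\psi\star\rho)\,d\tau}\ge 1$ yields
\[
\int_{s_0}^t (\beta_0-ks)\, e^{\int_0^s (\psi\star\rho)\,d\tau}\,ds \;\le\; \int_{s_0}^t (\beta_0-ks)\,ds \;=\; -\frac{k}{2}(t-s_0)^2.
\]
Combining these two pieces, $B(t)\le \tfrac{1}{\rho_0}+C-\tfrac{k}{2}(t-s_0)^2$, a downward parabola in $t$ that must vanish at some finite $t^\ast$. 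Since $\rho>0$ while its reciprocal is forced to reach zero, $\rho(t)\to+\infty$ as $t\uparrow t^\ast$.

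There is no real obstacle here: the closed form \eqref{sol-rho} has already absorbed the nonlinear coupling between $\rho$ and $d$, and the attractive sign $k>0$ is precisely what turns $\beta(t)=\beta_0-kt$ into an eventually large negative drift, producing the quadratic decay $-\tfrac{k}{2}t^2$ that overwhelms both the initial data $1/\rho_0$ and the bounded positive portion of the integrand. The only minor bookkeeping point is the sign of $\beta_0$, which is cleanly handled by the choice $s_0=\max(\beta_0/k,0)$ so that the argument runs uniformly in both cases $\beta_0\le 0$ and $\beta_0>0$.
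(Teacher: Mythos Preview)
Your argument is correct and follows the same route as the paper: both exploit the explicit representation \eqref{sol-rho} and the fact that $\beta_0 - ks$ eventually turns negative to drive the bracketed quantity to zero in finite time. The paper's own justification is only a one-line sketch preceding the lemma, and your proposal simply makes the estimate rigorous by splitting at $s_0=\max(\beta_0/k,0)$ and bounding the exponential weight between $1$ and $e^{\psi_M s}$; one small slip is that the equality $\int_{s_0}^t(\beta_0-ks)\,ds=-\tfrac{k}{2}(t-s_0)^2$ holds only when $s_0=\beta_0/k$, but in the remaining case $\beta_0\le 0$ the integral is even more negative, so the conclusion is unaffected.
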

For the repulsive case $k < 0$, critical thresholds are expected since a similar phenomenon is proved for both Euler-Poisson \cite{ELT} and
Euler-Alignment (Section \ref{sec:EA}) systems. We start with the
following rough estimate.
\begin{lemma}{[Rough subcritical region with repulsive potential].}\label{lem:roughsub}
If $k < 0$ and $\beta_0\geq0$, then $\rho(t)$ remains bounded for all time $t\geq0$. 
\end{lemma}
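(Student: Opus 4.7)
The plan is to work along the characteristic $x(a,t)$ with $\rho_0=\rho_0(a)>0$ and exploit the explicit representation \eqref{sol-rho} of $q(t):=1/\rho(x(a,t),t)$, which I rewrite using the integrating-factor identity as
\[
q(t) \;=\; q_0\, e^{-F(t)} \;+\; \int_0^t (\beta_0 - ks)\, e^{-(F(t)-F(s))}\, ds,\qquad F(t):=\int_0^t (\psi\star\rho)(x(a,\tau),\tau)\, d\tau,
\]
with $q_0=1/\rho_0$. The goal is to show $q(t)$ is uniformly bounded away from zero, which directly yields the desired uniform upper bound on $\rho$.

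Under the hypotheses $k<0$ and $\beta_0\geq 0$, the integrand satisfies $\beta_0-ks\geq -ks\geq 0$ and grows linearly in $s$, while the a priori bound $0\leq \psi\star\rho\leq\psi_M$ (coming from $\psi\leq\psi_M$ together with the mass normalization $\|\rho(\cdot,t)\|_{L^1}=1$) yields $F(t)\leq\psi_M t$ and $F(t)-F(s)\leq \psi_M(t-s)$. Substituting these pointwise bounds into the representation produces an \emph{explicit}, solution-independent lower bound
\[
q(t)\;\geq\; q_0\, e^{-\psi_M t}+(-k)\int_0^t s\, e^{-\psi_M(t-s)}\, ds \;=\; q_0\, e^{-\psi_M t}+\frac{-k}{\psi_M^{2}}\bigl(\psi_M t-1+e^{-\psi_M t}\bigr) \;=:\; h(t).
\]

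It remains to show $\inf_{t\geq 0}h(t)>0$. The elementary inequality $x-1+e^{-x}\geq 0$ for $x\geq 0$ (its derivative $1-e^{-x}$ is non-negative) shows that both summands of $h$ are non-negative, the first strictly, so $h(t)>0$ for every $t\in[0,\infty)$; on the other hand the linear term in the second summand forces $h(t)\to\infty$ as $t\to\infty$. Since $h$ is continuous, strictly positive everywhere, and tends to infinity, it attains a strictly positive infimum on $[0,\infty)$, whence $q(t)\geq\inf_{t\geq 0}h(t)>0$, i.e.\ $\rho(t)\leq 1/\inf h<\infty$ uniformly in time, as claimed. The only delicate point is that neither summand of $h$ alone suffices to yield a uniform lower bound—the first decays to zero for large $t$, while the second vanishes at $t=0$—so both the repulsive sign $k<0$ (which forces the linear growth of the source $-ks$) and the initial condition $\beta_0\geq 0$ (which preserves the sign of $\beta_0-ks$ at $s=0$) are needed to knit the small-$t$ and large-$t$ regimes into one uniform bound.
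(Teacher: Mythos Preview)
Your proof is correct and follows essentially the same route as the paper: both exploit the explicit representation \eqref{sol-rho} together with the positivity of $\beta_0-ks$ under the hypotheses. The paper's argument is shorter---it simply drops the (non-negative) integral term to obtain $1/\rho(t)\geq e^{-\psi_M t}/\rho_0>0$, which rules out finite-time blow-up; you retain a lower bound for the integral and do extra work to extract a uniform-in-time bound, which is a stronger conclusion than the paper actually proves or needs.
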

\begin{proof}
By our assumption on $k$ and $\beta_0$ we have $\beta_0-kt\geq0$
for all $t\geq0$. Also,
\bq\label{bound-est}
0 \leq \psi_m t\leq \int_0^t (\psi \star \rho) ds \leq \psi_M t.
\eq
Hence,  the following lower bound, $1/\rho(t)\geq e^{-\psi_Mt}/\rho_0>0$,
follows directly from \eqref{sol-rho}.
\end{proof}
We notice that lemma \ref{lem:roughsub} provides the same subcritical region as in the Euler-Alignment system, namely, $\pa_xu_0+\psi\star\rho_0>0$.
With the additional repulsive force, however, the subcritical region
is expected to be larger. Indeed, we turn to derive a refined estimate which yields the larger
subcritical region stated in Theorem \ref{thm:EAP}.\newline
Consider the case when $\beta_0<0$. To bound $\rho$,  it is enough check when the
following value is zero or not:
\bq\label{rho:est-1}
\frac{1}{\rho_0} + \int_0^t (\beta_0 - ks)e^{\int_0^s (\psi \star \rho) d\tau} ds.
\eq
Since $\beta_0 - ks \leq 0$ for $s \leq \frac{\beta_0}{k}$, we rewrite \eqref{rho:est-1} as 
\bq\label{rho:est-2}
\frac{1}{\rho_0} + \int_0^{\frac{\beta_0}{k}} (\beta_0 - ks)e^{\int_0^s (\psi \star \rho) d\tau} ds + \int_{\frac{\beta_0}{k}}^t (\beta_0 - ks)e^{\int_0^s (\psi \star \rho) d\tau} ds.
\eq
Thus if there is no blow-up of solutions until $t \leq \frac{\beta_0}{k}$ then it holds for all times, due to the positivity of the last term in \eqref{rho:est-2}. 

\begin{proposition}\label{prop:exact}
Consider the dynamics \eqref{newt-eq} with $k<0$ and $\beta_0<0$. Then
$\rho(\cdot,t)$ remains bounded if and only if
\bq\label{rho:iff}
\frac{1}{\rho_0} + \int_0^{\frac{\beta_0}{k}} (\beta_0 - ks)e^{\int_0^s (\psi \star \rho) d\tau} ds > 0.
\eq
\end{proposition}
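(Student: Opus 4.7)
The plan is to recast the entire question as the sign of a single scalar function along the characteristic and exploit its monotonicity. Define
$$F(t) := \frac{1}{\rho_0} + \int_0^t (\beta_0 - ks)\, e^{\int_0^s (\psi\star\rho)\, d\tau}\, ds,$$
so that the implicit formula \eqref{sol-rho} reads $1/\rho(t) = e^{-\int_0^t (\psi\star\rho)\, ds}\, F(t)$. Since $0 \le \psi\star\rho \le \psi_M$, the exponential prefactor is pinched between $e^{-\psi_M t}$ and $1$, hence strictly positive and finite on any bounded time interval. In particular, $\rho$ blows up at some finite $t^\star$ if and only if $F(t^\star) = 0$, and the condition \eqref{rho:iff} is precisely $F(t^\sharp) > 0$ with $t^\sharp := \beta_0 / k > 0$ (positive because $\beta_0$ and $k$ are both negative).

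The key observation is the monotonicity of $F$. Differentiating under the integral gives $F'(t) = (\beta_0 - kt)\, e^{\int_0^t (\psi\star\rho)\, d\tau}$, whose sign is that of $\beta_0 - kt$: strictly negative on $[0, t^\sharp)$ and strictly positive on $(t^\sharp, \infty)$. Hence $F$ is strictly decreasing on $[0, t^\sharp]$, attains its unique global minimum at $t^\sharp$, and strictly increases thereafter. Combined with the initial value $F(0) = 1/\rho_0 > 0$, the dichotomy is immediate. If $F(t^\sharp) > 0$, then $F(t) \ge F(t^\sharp) > 0$ on $[0, t^\sharp]$ by the decreasing branch and again $F(t) \ge F(t^\sharp) > 0$ for $t > t^\sharp$ by the increasing branch, so $\rho$ stays bounded globally. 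Conversely, if $F(t^\sharp) \le 0$, continuity together with $F(0) > 0$ produces some $t^\star \in (0, t^\sharp]$ with $F(t^\star) = 0$, so $\rho$ blows up no later than $t^\sharp$.

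The one subtlety worth flagging, which I expect to be the main point to handle carefully, is that \eqref{rho:iff} is an implicit condition: the integrand itself depends on the unknown $\psi \star \rho$. This is resolved by the structure above in two steps. First, since the decreasing branch forces any finite-time blowup to occur no later than $t^\sharp$, it is consistent to evaluate $F$ on the actual solution up to $t^\sharp$ whenever the solution survives that long. Second, once existence past $t^\sharp$ is secured, the increasing branch of $F$ together with the bounded exponential factor rules out later blowup, so the sign of $F(t^\sharp)$ is genuinely the decisive quantity and no additional continuation argument is needed beyond the explicit representation \eqref{sol-rho}.
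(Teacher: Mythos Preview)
Your argument is correct and follows essentially the same route as the paper: you package the parenthesized factor in \eqref{sol-rho} as $F(t)$, observe that $F'(t)=(\beta_0-kt)e^{\int_0^t(\psi\star\rho)\,d\tau}$ changes sign exactly at $t^\sharp=\beta_0/k$, and conclude that $F$ is minimized there, so positivity of $F(t^\sharp)$ is equivalent to global positivity of $F$ and hence boundedness of $\rho$. The paper phrases the same idea by splitting the integral in \eqref{rho:est-2} at $\beta_0/k$ and noting the tail $\int_{\beta_0/k}^t$ is nonnegative; your monotonicity formulation is a slightly cleaner rendering of the identical observation, and your explicit treatment of both directions of the ``if and only if'' and the remark that $\psi\star\rho\le\psi_M$ keeps the exponential factor finite are welcome clarifications absent from the paper's brief discussion.
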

In order to derive a sufficient condition for \eqref{rho:iff} determined by the initial conditions, we use \eqref{bound-est} to get
\begin{align*}
\begin{aligned}
\int_0^{\frac{\beta_0}{k}} (\beta_0 - ks)e^{\int_0^s (\psi \star \rho) d\tau} ds &\geq \int_0^{\frac{\beta_0}{k}} (\beta_0 - ks)e^{\psi_M s} ds \cr
&= -\frac{\beta_0}{\psi_M} + \frac{k}{\psi_M^2}\lt( e^{\psi_M\beta_0/k} - 1\rt)= -\frac{1}{\psi_M^2}\lt(k + \psi_M\beta_0 - ke^{\psi_M\beta_0/k}\rt).
\end{aligned}
\end{align*}
Thus, we deduce that if initially
\[
\frac{1}{\rho_0} -\frac{1}{\psi_M^2}\lt(k + \psi_M\beta_0 - ke^{\psi_M\beta_0/k}\rt) > 0,
\]
then there is no finite-time blow-up of classical solutions. Note that the left
hand side is increasing with respect to $\beta_0$ if $\beta_0<0$. This together with Lemma \ref{lem:roughsub} implies the subcritical region in part (2) of Theorem
\ref{thm:EAP}.

Next, we estimate the blow-up criterion of solutions. According to
Proposition \ref{prop:exact}, we shall find a sufficient condition of $d_0$ that makes
\[
\frac{1}{\rho_0} + \int_0^{\frac{\beta_0}{k}} (\beta_0 - ks)e^{\int_0^s (\psi \star \rho) d\tau} ds \leq 0.
\]
Since 
\[
\int_0^{\frac{\beta_0}{k}} (\beta_0 - ks)e^{\int_0^s (\psi \star \rho) d\tau} ds \leq \int_0^{\frac{\beta_0}{k}} (\beta_0 - ks) ds = \frac12 \frac{\beta_0^2}{k},
\]
we obtain $1/\rho_0 + \beta_0^2/2k \leq 0$, and this holds if $d_0
\leq - \sqrt{-2k\rho_0}$. It concludes that if $d_0 \leq -
\sqrt{-2k\rho_0}$ then there exists $t_*$ such that $\rho(t) \to
+\infty$ until $t \leq t_*$. 

\begin{remark}\label{rem:EAPsharp}
If $\psi$ has a positive lower bound, i.e., $\psi_m>0$, a better bound
can be obtained as follows:
\begin{align*}
\begin{aligned}
\int_0^{\frac{\beta_0}{k}} (\beta_0 - ks)e^{\int_0^s (\psi \star \rho) d\tau} ds &\leq \int_0^{\frac{\beta_0}{k}} (\beta_0 - ks)e^{\psi_m s} ds 
= -\frac{1}{\psi_m^2}\lt(k + \psi_m\beta_0 - ke^{\psi_m\beta_0/k}\rt).
\end{aligned}
\end{align*}
Therefore, we arrive at a refined supercritical region, where
$\sigma_-$ in Theorem \ref{thm:EAP} can be redefined as
\[
\frac{1}{\rho_0(x)} -\frac{1}{\psi_m^2}\lt(k + \psi_m \sigma_-(x)/\rho_0(x) - ke^{\psi_M \sigma_-(x)/k\rho_0(x)}\rt) = 0, \quad \mbox{for} \quad \rho_0(x)>0,
\]
and $\sigma_-(x)=0$ for $\rho_0(x)=0$.
In particular when $\psi$ is a constant, $\sigma_+=\sigma_-$, the two thresholds
matches and the results are sharp. 
\end{remark}

It follows from \eqref{re-newt} that if $\rho(t)$ blows up in finite
time, then $d(t)$ is also blowing up in finite-time. Similarly, if
$\rho(t)$ remains bounded, then $d(t)$ remains bounded as well.
As $|\psi\star\rho|\leq\psi_M$, $\rho(t)$ and $\pa_xu(t)$ blow up
simultaneously, concluding the proof of Theorem \ref{thm:EAP}.

%%%%%%%%%%%%%%%%%%%%%%%%%%%%%%%%%%%%%%%%
% General cases
%%%%%%%%%%%%%%%%%%%%%%%%%%%%%%%%%%%%%%%%
\subsection{Euler-Alignment with general attractive-repulsive potentials}\label{sec:general}
In this part, we consider Euler-Alignment system with general
attractive-repulsive potentials:
\begin{align*}
\begin{aligned}
&\pa_t \rho + \pa_x (\rho u) = 0,\cr
&\pa_t u + u \pa_x u = \int_{\R} \psi(x-y)(u(y,t) - u(x,t)) \rho(y,t) dy + \pa_x K \star \rho.
\end{aligned}
\end{align*}
By using the similar arguments in Section \ref{sec:EA}, we find
\begin{align}\label{eq-prop2}
\begin{aligned}
& \rho^\prime = - \rho (d - \psi \star \rho),\cr
& d^\prime = - d(d - \psi \star \rho) + \kk \star \rho.
\end{aligned}
\end{align}

For this system, we can classify the initial configurations that
leading to the global regularity or the finite-time breakdown, when
$\kk$ is bounded. 
\begin{proposition}\label{prop2} Consider the system \eqref{eq-prop2}. Then the following holds.
\begin{itemize}
\item {[Attractive case $\kk > 0$]}. If $d_0 < 0$, then $d(t) \to - \infty$  in finite time.
\item {[Repulsive case $\kk < 0$]}. If $d_0 \geq 0$, then $d(t)$ remains uniformly bounded for all time $t \geq 0$.
On the other hand,
if $d_0<-\sqrt{\|\kk\|_{L^\infty}}$,
then $d(t) \to - \infty$ in finite time.
\end{itemize}
\end{proposition}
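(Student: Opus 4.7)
The plan is to read the $d$-equation in \eqref{eq-prop2} as a scalar Riccati-type ODE along each characteristic, with $\psi\star\rho$ and $\kk\star\rho$ playing the role of bounded, time-dependent coefficients (since $|\psi\star\rho|\leq\psi_M$ and $|\kk\star\rho|\leq\|\kk\|_{L^\infty}$ uniformly in $t$, independently of the $\rho$-dynamics). Each bullet of the proposition then reduces to a one-sided comparison of $d$ with an autonomous scalar Riccati ODE whose boundedness or blowup is explicit, in direct analogy with the argument behind Proposition \ref{prop1}.

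For the attractive case with $d_0<0$, I would expand the right-hand side as $d'=-d^2+d(\psi\star\rho)+(\text{forcing})$. When $d<0$, each of these three terms has the right sign: $-d^2\leq 0$ trivially, the cross term is $\leq 0$ since $\psi\star\rho\geq 0$, and the attractive forcing is signed so as to push $d$ downward. Hence $d'\leq -d^2$, and classical Riccati comparison gives $d(t)\leq d_0/(1+d_0 t)$, so $d\to -\infty$ by time $-1/d_0$ at the latest.

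For the repulsive case I would treat the two regimes separately. If $d_0\geq 0$, a barrier argument shows $d(t)\geq 0$ for all time: at any instant where $d=0$ the quadratic and cross terms vanish, so the equation reduces to $d'=(\text{repulsive forcing})\geq 0$, preventing $d$ from crossing into negative values. An upper bound then follows from the crude estimate $d'\leq -d^2+\psi_M d+\|\kk\|_{L^\infty}$ by comparing with the autonomous Riccati ODE $\tilde d'=-\tilde d^2+\psi_M\tilde d+\|\kk\|_{L^\infty}$, whose larger root $d_+:=\tfrac12\bigl(\psi_M+\sqrt{\psi_M^2+4\|\kk\|_{L^\infty}}\bigr)$ is a globally attracting equilibrium, giving $d(t)\leq\max(d_0,d_+)$. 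If instead $d_0<-\sqrt{\|\kk\|_{L^\infty}}$, I would drop the favorable term $d(\psi\star\rho)\leq 0$ to get $d'\leq -d^2+\|\kk\|_{L^\infty}$; since $d_0^2>\|\kk\|_{L^\infty}$, this is strictly negative at $t=0$, so $d$ decreases below $d_0$ and the inequality $d^2>\|\kk\|_{L^\infty}$ propagates in time. Setting $\varepsilon:=1-\|\kk\|_{L^\infty}/d_0^2\in(0,1)$ upgrades the bound to $d'\leq -\varepsilon d^2$, and Riccati comparison again yields finite-time blowup, with an explicit bound of the form $1/(\varepsilon|d_0|)$ on the lifespan.

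The most delicate step is the lower barrier $d(t)\geq 0$ in the repulsive subcritical regime, because $d=0$ is precisely the equilibrium of the unperturbed quadratic part, and the no-crossing property hinges entirely on the pointwise sign of the repulsive forcing $\kk\star\rho$. One should also check the vacuum scenario $\rho_0(a)=0$ along a characteristic, in which $\rho(x(a,t),t)\equiv 0$ but $\psi\star\rho$ and $\kk\star\rho$ evaluated at $x(a,t)$ are in general nontrivial; in this degenerate case the $d$-dynamics essentially collapses to the framework of Proposition \ref{prop1}, so the claim follows directly.
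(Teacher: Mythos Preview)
Your proof is correct and follows essentially the same approach as the paper: along each characteristic, read the $d$-equation as a Riccati-type ODE with bounded coefficients and use one-sided comparison with explicit autonomous equations to get blowup or global bounds. The paper phrases the comparison steps by referring back to the unforced system \eqref{eq-prop1} and Proposition~\ref{prop1} rather than writing out the Riccati solutions directly, but the substance is identical.
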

\begin{proof}
$\diamond$ We begin with the attractive case $(\kk \geq  0)$. In this case, we find from \eqref{eq-prop2} that
\begin{align}\label{eq-prop2-1}
\begin{aligned}
& \rho^\prime = - \rho (d - \psi \star \rho),\cr
& d^\prime \leq - d(d - \psi \star \rho).
\end{aligned}
\end{align}
Then one can use the comparison principle for the above system \eqref{eq-prop2-1} with the system \eqref{eq-prop1} to obtain
\[
d(t) \to - \infty \quad \mbox{in finite time} \quad \mbox{if} \quad d_0 < 0.
\]
We turn to the repulsive case, $(\kk < 0)$. We have
\[
d^\prime = -d^2 + (\psi \star \rho)d + \kk\star\rho.
\]
To obtain a global bound on $d$, we estimate when $d\geq0$,
\[
d^\prime\geq -d^2 + (\psi \star \rho)d.
\]
Thus $d(t)\geq0$ if $d_0\geq0$ due to the comparison principle with the system \eqref{eq-prop1}.
Moreover, we can also obtain the upper bound when $d>0$.
\[
d^\prime \leq -d^2 + \psi_Md + B=-(d-d_1^*)(d-d_2^*), \quad 
B:=\|\kk\|_{L^\infty},
\]
where
\[
d_1^* := \frac{\psi_M-\sqrt{\psi_M^2+4B}}{2}
 \quad \mbox{and} \quad d_2^* :=\frac{\psi_M+\sqrt{\psi_M^2+4B}}{2}>0.
\]
It implies $d^\prime<0$ for $d>d_2^*$, and this deduces $d$ has an upper bound. Hence we have the global boundedness of $d$.

On the other hand, for $d<0$, the upper bound is given as
\[
d^\prime \leq -d^2 + B = -
\left(d-\sqrt{B}\right) \left(d+\sqrt{B}\right),
\]
If $d_0<-\sqrt{B}$, through the comparison principle, 
it is easy to see that $d(t)\to-\infty$ in finite time.
\end{proof}

Collecting all characteristic flows together, we deduce Theorem \ref{thm13}.

We can also have more refined estimates by using the same argument as in Proposition \ref{prop2} when the influence function $\psi$ is bounded from below by $\psi_m > 0$. In this case, we can treat the case of combined attractive and repulsive forces.

\begin{corollary}\label{coroll1} Consider the system \eqref{main-eq} with the nonlocal interaction force $K \in \dot{W}^{2,\infty}(\R)$. Suppose that the influence function $\psi$ satisfies $\psi(x) \geq \psi_m >0$. If the initial slope $u'_0$ is not ``too negative" in the sense that
\[
\psi_m^2 \geq 4B \quad \mbox{and} \quad d_0 \geq - \frac{\psi_m + \sqrt{\psi_m^2 - 4B}}{2},
\]
then $d(t)$ is bounded for all time $t \geq 0$. On the other hand, if $u'_0$ is ``too negative" in the sense that 
\[
d_0 < \frac{\psi_m - \sqrt{\psi_M^2 + 4B}}{2},
\]
then $d(t) \to - \infty$ in a finite time. 
\end{corollary}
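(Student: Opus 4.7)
The plan is to reduce the statement to an analysis of the characteristic ODE, paralleling Proposition \ref{prop2}, now exploiting the positive lower bound $\psi \star \rho \geq \psi_m$ to treat the case where the sign of $\partial_x^2 K$ is mixed. Along the characteristic flow, \eqref{eq-prop2} reads
\[
d' = -d^2 + (\psi\star\rho)\,d + \partial_x^2 K\star\rho,
\]
with $\psi\star\rho \in [\psi_m,\psi_M]$ (since $\|\rho\|_{L^1}=1$) and $|\partial_x^2 K\star\rho| \leq B := \|\partial_x^2 K\|_{L^\infty}$. The two thresholds will be obtained by replacing the nonlocal coefficients by their extremal values and reading off the roots of the resulting autonomous quadratic comparison ODEs.

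For the \emph{subcritical} bound I would establish a lower estimate of the form $d' \geq -d^2 - \psi_m d - B$, combining $\psi\star\rho \geq \psi_m$ with $\partial_x^2 K\star\rho \geq -B$. Under the hypothesis $\psi_m^2 \geq 4B$, the right-hand side has two real negative roots, and the more negative one, $\alpha := -\frac{1}{2}(\psi_m + \sqrt{\psi_m^2-4B})$, is the unstable fixed point of the comparison ODE $D' = -D^2 - \psi_m D - B$. Starting with $d_0 \geq \alpha$, a standard ODE comparison then forces $d(t) \geq \alpha$ for all $t\geq 0$. The uniform upper bound on $d$ is easier and parallels Proposition \ref{prop2}: for $d \geq 0$ one has $d' \leq -d^2 + \psi_M d + B$, whose positive stable root $\frac{1}{2}(\psi_M + \sqrt{\psi_M^2 + 4B})$ caps $d$ from above. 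Together these give global boundedness of $d$.

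For the \emph{supercritical} blow-up I would switch to an upper estimate in the regime $d<0$: since $d<0$ and $\psi\star\rho \geq \psi_m$ give $(\psi\star\rho)\,d \leq \psi_m d$, and $\partial_x^2 K\star\rho \leq B$, we arrive at
\[
d' \leq -d^2 + \psi_m d + B.
\]
If $d_0$ sits below the negative root of this quadratic, then the right-hand side is strictly negative along the trajectory, so $d$ is monotone decreasing, and the $-d^2$ term drives $d(t) \to -\infty$ in finite time, exactly as in the first case of Proposition \ref{prop1}.

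The main obstacle is the subcritical lower bound: at $d=\alpha<0$, the sign of $(\psi\star\rho)\,d$ runs against the desired inequality $d'\geq D'$, so the pointwise comparison must be set up carefully, using both the defining identity $\alpha^2 + \psi_m \alpha + B = 0$ and the full range of allowed coefficients rather than a naive worst-case estimate. The hypothesis $\psi_m^2 \geq 4B$ is precisely what is needed for $\alpha$ to be real and to serve as an invariant barrier for the characteristic ODE, making this the sharp regime in which alignment friction dominates the interaction strength.
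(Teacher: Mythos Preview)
Your overall strategy---compare $d'=-d^2+(\psi\star\rho)\,d+\partial_x^2K\star\rho$ with constant--coefficient quadratics obtained by replacing the nonlocal coefficients by their extremes---is exactly the paper's, which offers no separate proof and simply says the corollary follows ``by using the same argument as in Proposition~\ref{prop2}''. Your supercritical half is correct: for $d<0$ one has $(\psi\star\rho)\,d\le \psi_m d$ and $\partial_x^2K\star\rho\le B$, hence $d'\le -d^2+\psi_m d+B$, and $d_0$ below the negative root forces finite--time blow-up (this even gives the slightly sharper threshold $\tfrac12(\psi_m-\sqrt{\psi_m^2+4B})$, which implies the one in the statement).

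The subcritical half, however, has a genuine gap. The lower estimate you write, $d'\ge -d^2-\psi_m d-B$, is \emph{false} for $d<0$: from $\psi\star\rho\ge\psi_m$ and $d<0$ one gets $(\psi\star\rho)\,d\le \psi_m d$, not $\ge$, so the only pointwise lower bound available there is $d'\ge -d^2+\psi_M d-B$, which is strictly negative on all of $\{d\le 0\}$. In particular, at the stated barrier $\alpha=-\tfrac12(\psi_m+\sqrt{\psi_m^2-4B})<0$ the worst case (take $\psi\star\rho$ near $\psi_M$, $\partial_x^2K\star\rho$ near $-B$) gives $d'<0$, so $\alpha$ is \emph{not} invariant for the characteristic ODE. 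The ``obstacle'' you flag is therefore not fixable by a more careful setup: no comparison with a constant-coefficient quadratic produces a negative lower barrier in the mixed-sign setting. What the Proposition~\ref{prop2} argument actually yields, once $\psi_m>0$ is used, is $d'\ge -d^2+\psi_m d-B$ for $d\ge 0$, whose smaller root $\tfrac12(\psi_m-\sqrt{\psi_m^2-4B})\ge 0$ is a valid invariant lower barrier (and reduces to $d_0\ge 0$ when $B=0$). With that positive root in place of the negative one your argument goes through verbatim; the minus sign in the stated threshold appears to be a misprint.
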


\begin{remark}\label{rem:genlower}
Corollary \ref{coroll1} implies that if the influence function $\psi$ is
bounded from below and it is sufficiently large, then we have better
threshold conditions:
\begin{itemize}
\item {[Subcritical region]}. If $\psi_m^2 \geq 4B$ and
\[\pa_x u_0(x) \geq -\psi \star \rho_0(x) - \frac{1}{2}\lt(\psi_m +
\sqrt{\psi_m^2 - 4B}\rt)\]
for all $x\in\R$, then the system has a global classical solution.
\item {[Supercritical region]}. If there exists an $x$ such that
\[\pa_x u_0(x) < -\psi \star \rho_0(x) + \frac{1}{2}\lt(\psi_m - \sqrt{\psi_M^2 +
  4B}\rt),\]
then the solution blows up in finite time.
\end{itemize}
\end{remark}

%%%%%%%%%%%%%%%%%%%%%%%%%%%%%%%%%%%%%%%%
% Pressure
%%%%%%%%%%%%%%%%%%%%%%%%%%%%%%%%%%%%%%%%
\section{A remark on Euler-Alignment system with pressure}\label{sec:pressure}
In this section, we consider Euler-Alignment system with
pressure,
\begin{align}\label{press}
\begin{aligned}
&\pa_t \rho + \pa_x(\rho u) = 0, \quad x \in \R, \quad t >0,\cr
&\pa_t u + u\pa_x u + \frac{\pa_x p(\rho)}{\rho} = \int_{\R}\psi(x-y)(u(y,t) - u(x,t))\rho(y,t)\,dy.
\end{aligned}
\end{align}

The pressure is usually modeled through a power law
$p(\rho)=A\rho^\gamma$, with $A\geq0$ and $\gamma\geq1$.
In particular, if there is no alignment interaction, namely
$\psi\equiv0$, the system becomes 1D compressible Euler equation with
isentropic pressure, where general initial data leads to finite-time
blowup. On the other hand, if $A=0$, we recover the pressure-less system
\eqref{eq:EA}.

It is of great interest to see whether alignment force regularizes
Euler equation with pressure, and if there exists a non-empty
subcritical region of initial configurations such
that the solution of \eqref{press} is globally smooth.
As the pressure term destroys the original characteristic structure,
it is more delicate the trace the dynamics.
To this end, we follow the argument of \cite{TW}, where
Euler-Poisson equations with pressure is discussed. 

Rewrite \eqref{press} as the following system
\bq\label{press-sys}
\begin{pmatrix}\rho\\u\end{pmatrix}_t+
\begin{pmatrix}u&\rho\\A\gamma\rho^{\gamma-2}&u\end{pmatrix}
\begin{pmatrix}\rho\\u\end{pmatrix}_x=
\begin{pmatrix}0\\ \int_{\R}\psi(x-y)(u(y,t) -
  u(x,t))\rho(y,t)\,dy\end{pmatrix}.
\eq
We decouple the system by diagonalizing the matrix in
\eqref{press-sys}. It yields
\[\begin{aligned}
R_t+\lambda R_x= \int_{\R}\psi(x-y)(u(y,t) - u(x,t))\rho(y,t)\,dy,\\
S_t+\mu S_x=\int_{\R}\psi(x-y)(u(y,t) -u(x,t))\rho(y,t)\,dy,
\end{aligned}
\]
%\textcolor{red}{YP: Do we also need to show that the boundedness of $R$ and $S$?}
where $\lambda,\mu$ are eigenvalues of the matrix
\[\lambda:=u-\sqrt{A\gamma}\rho^{(\gamma-1)/2},\quad
\mu:=u+\sqrt{A\gamma}\rho^{(\gamma-1)/2},
\]
and $R,S$ are the corresponding Riemann invariants
\[R=\begin{cases}
u-\frac{2\sqrt{A\gamma}}{\gamma-1}\rho^{(\gamma-1)/2}&\gamma>1\\
u-\sqrt{A}\ln\rho&\gamma=1
\end{cases},\quad
S=\begin{cases}
u+\frac{2\sqrt{A\gamma}}{\gamma-1}\rho^{(\gamma-1)/2}&\gamma>1\\
u+\sqrt{A}\ln\rho&\gamma=1
\end{cases}.\] 

Let us denote material derivatives $^\backprime=\pa_t+\lambda\pa_x$ and
$^\prime=\pa_t+\mu\pa_x$ along two particle paths.
We derive the dynamics of $R_x$ and $S_x$ using the same procedure as in \cite{TW}:
\bq\label{press-pair-1}
\begin{aligned}
R_x^\backprime+\frac{1+\theta}{2}R_x^2+\frac{1-\theta}{2}R_xS_x=\pa_x \int_{\R}\psi(x-y)(u(y,t) - u(x,t))\rho(y,t)\,dy,\\
S_x^\prime+\frac{1+\theta}{2}S_x^2+\frac{1-\theta}{2}R_xS_x=\pa_x\int_{\R}\psi(x-y)(u(y,t) -u(x,t))\rho(y,t)\,dy,
\end{aligned}
\eq
where $\theta=\frac{\gamma-1}{2}$.

We treat the right hand side of the system similarly as the
pressure-less system. Define
\[r=R_x+\psi\star\rho\quad\text{and}\quad s=S_x+\psi\star\rho.\]
The paired equations \eqref{press-pair-1} can be written in terms of
$(r, s)$.
\begin{align*}%\label{press-pair}
\begin{aligned}
r^\backprime+\frac{1+\theta}{2}r^2+\frac{1-\theta}{2}rs=
\left[\left(1+\frac{\theta}{2}\right)r-\frac{\theta}{2}s\right](\psi\star\rho)
-\sqrt{A\gamma}\rho^\theta(\psi_x\star\rho),\\
s^\prime+\frac{1+\theta}{2}s^2+\frac{1-\theta}{2}rs=
\left[\left(1+\frac{\theta}{2}\right)s-\frac{\theta}{2}r\right](\psi\star\rho)
+\sqrt{A\gamma}\rho^\theta(\psi_x\star\rho).
\end{aligned}
\end{align*}

We begin our study on a very special isothermal case, where $p(\rho)=A\rho$.
Moreover, we take $\psi\equiv C>0$. Under this setup, the
alignment force is reduced to a local damping force, and system
\eqref{press} is simplified to 
\bq\label{eq:Eulerdamping}
\begin{aligned}
&\pa_t \rho + \pa_x(\rho u) = 0,\cr
&\pa_t u + u\pa_x u + A\pa_x (\ln\rho) = -Cu + C\int_\R \rho_0 u_0\,dx.
\end{aligned}
\eq
Here we used the momentum conservation of the system \eqref{press}, i.e.,
\[
\int_\R \rho u\,dx = \int_\R \rho_0 u_0\,dx, \quad \mbox{for} \quad t \geq 0.
\]
Under the above assumptions, the dynamics of $(r,s)$ can be simplified as follows.
\begin{subequations}\label{press-isothermal}
\begin{eqnarray}
&&r^\backprime=-\frac{1}{2}r^2-\frac{1}{2}rs+
Cr,\label{press-isothermala}\\
&&s^\prime=-\frac{1}{2}s^2-\frac{1}{2}rs+Cs
\label{press-isothermalb}.
\end{eqnarray}
\end{subequations}

The following proposition shows a subcritical threshold where global
regularity is guaranteed.
\begin{proposition}[Subcritical region]
If $r_0$ and $s_0$ are nonnegative and $r_0,s_0 \in L^\infty(\R)$, then $r(x,t),s(x,t)$ are bounded for all $x\in\R,t\geq0$. More precisely, we have
\[
0 \leq r(x,t), s(x,t) \leq \max\{ \|r_0\|_{L^\infty}, \|s_0\|_{L^\infty}, 2C\} \quad \mbox{for }\, (x,t) \in \R \times \R_+.
\] 
\end{proposition}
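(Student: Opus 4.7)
The plan is to establish the two-sided bound via a continuation argument that exploits the Bernoulli/logistic structure of \eqref{press-isothermala}--\eqref{press-isothermalb} along their respective characteristic flows. Set $M_0 := \max\{\|r_0\|_{L^\infty}, \|s_0\|_{L^\infty}, 2C\}$ and let
\[
T^* := \sup\bigl\{T \geq 0 \,:\, 0 \leq r(x,t), s(x,t) \leq M_0 \text{ for every } (x,t) \in \R \times [0,T]\bigr\}.
\]
The goal is to show that the bound is self-propagating, so that $T^* = +\infty$.

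First I would establish nonnegativity. Along the $\lambda$-characteristic $X(a,\cdot)$ issuing from $a$, equation \eqref{press-isothermala} is linear in $r$ with coefficient $C-\tfrac{1}{2}(r+s)$, so it integrates explicitly to
\[
r(X(a,t),t) = r_0(a)\exp\!\left(\int_0^t \left(C - \tfrac{1}{2}(r+s)\right)(X(a,\tau),\tau)\,d\tau\right).
\]
As long as $r,s$ remain bounded on $[0,T^*)$, the exponent is finite, the exponential factor is strictly positive, and $r_0\geq 0$ then yields $r(\cdot,t)\geq 0$ on $[0,T^*]$. The same argument applied to \eqref{press-isothermalb} along $\mu$-characteristics gives $s(\cdot,t)\geq 0$.

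Next I would establish the upper bound. Let $\overline{r}(t) := \sup_{x\in\R} r(x,t)$. At a point $x_*(t)$ at which this supremum is realized (or, if it is not attained, along an $\varepsilon$-maximizing sequence), $\pa_x r(x_*,t)=0$, so
\[
\frac{d^+}{dt}\overline{r}(t) = r^\backprime(x_*,t) = \overline{r}(t)\left(C - \tfrac{1}{2}\overline{r}(t) - \tfrac{1}{2}s(x_*,t)\right) \leq \overline{r}(t)\left(C - \tfrac{1}{2}\overline{r}(t)\right),
\]
where the last step uses $s\geq 0$ from the previous step. Comparison with the logistic ODE $y'=y(C-y/2)$, whose solutions satisfy $y(t)\leq\max\{y(0),2C\}$, gives $\overline{r}(t)\leq M_0$. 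The analogous argument on \eqref{press-isothermalb} yields $\overline{s}(t)\leq M_0$.

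The two steps together show that the bound $0\leq r,s\leq M_0$ is preserved on every closed subinterval of $[0,T^*)$; hence $T^*$ cannot be finite. The main obstacle is the mismatch of transport directions: $r$ is carried by the $\lambda$-flow and $s$ by the $\mu$-flow, so \eqref{press-isothermal} is not an ODE system along a single trajectory. I sidestep this by using the sign of the ``other'' variable only as a one-sided coefficient estimate, thereby decoupling the two scalar maximum-principle arguments. A minor technical point is the Dini-derivative step when the spatial supremum is not attained, which is handled by the standard approximation mentioned above.
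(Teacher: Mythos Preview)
Your proof is correct and rests on the same invariant-region idea as the paper: bootstrap the two-sided bound $0\le r,s\le M_0$ by using the sign of the ``other'' variable only as a one-sided coefficient. The execution differs in two minor but genuine ways. For the lower bound, you exploit that \eqref{press-isothermala} is linear in $r$ and write the explicit exponential formula; the paper instead compares with the scalar ODE $q'=-\tfrac12 q^2-\tfrac12 M_0 q+Cq$, $q(0)=0$, whose unique solution is $q\equiv 0$. For the upper bound, you pass to the spatial supremum $\overline r(t)$ and invoke a Dini-derivative argument at an (approximate) maximizer; the paper stays on each $\lambda$-characteristic separately and compares directly with the logistic ODE there, which sidesteps the attainment-of-supremum issue you flag. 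Both routes land on the same inequality $r^\backprime\le -\tfrac12 r(r-2C)$ once $s\ge 0$ is known, and give the same bound $\max\{\|r_0\|_{L^\infty},\|s_0\|_{L^\infty},2C\}$.
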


\begin{proof}
As $r$ and $s$ has the same dynamics along their own characteristics, it suffices to prove that $r$ is bounded along its characteristic path.

For the proof, we look for an invariant region of the form $[0,M_0]$ with $M_0 > 0$ which will be determined later. If $r,s \in [0, M_0]$, then we find from \eqref{press-isothermala} that
\[
r^\backprime \geq -\frac12 r^2 - \frac12M_0 r + Cr.
\]
We will show that $r$ can not escape from the left. Let $q$ solve the following differential equation:
\[
\frac{d q}{dt} = -\frac12 q^2 - \frac12M_0 q + Cq,
\]
with the initial data $q_0 = 0$. Then by the uniqueness of solutions to the above equation, we obtain $q(t) \equiv 0$. This and together with the comparison lemma yield $r(t) \geq 0$ for all time. 

We now show that $r$ has an upper bound. We notice that
\[
r^\backprime = -\frac12 r(r - 2C) - \frac12 sr \leq -\frac12 r(r - 2C).
\]
Thus if $r_0 \in (0,2C]$, then $r(t) \leq 2C$ for all time. On the other hand, if $r_0 > 2C$, then we clearly get $r^\backprime \leq 0$ and this implies that $r(t) \leq r_0$ for all time. Hence we obtain that $r$ is uniformly bounded in time with 
\[
0 \leq r(t) \leq \max\{ \|r_0\|_{L^\infty}, 2C\}.
\]
The boundedness of $s$ can be done by the same process, and this implies
\[
0 \leq r(t), s(t) \leq \max\{ \|r_0\|_{L^\infty}, \|s_0\|_{L^\infty}, 2C\}.
\] 
We conclude our desired result by choosing $M_0 = \max\{ \|r_0\|_{L^\infty}, \|s_0\|_{L^\infty}, 2C\}$.
\end{proof}

Recall $r_0=\pa_xu_0-\sqrt{A}(\pa_x\rho_0)/\rho_0+C$ and
$s_0=\pa_xu_0+\sqrt{A}(\pa_x\rho_0)/\rho_0+C$. The subcritical region is equivalent to
\bq\label{eq:EDsub}
\pa_x u_0(x)\geq-C+\sqrt{A}\left|\frac{\pa_x\rho_0(x)}{\rho_0(x)}\right|,\quad\forall~x\in\mathbb{R}.
\eq
For $C>0$, the set $(\rho_0,u_0)$ is non-empty. In the case of $A=0$,
we recover the subcritical threshold condition for pressureless Euler
dynamics with damping.

On the other hand, the boundedness of $r$ and $s$ imply the boundedness of
$\pa_xu$ and $\pa_x\ln\rho$, we obtain the following global existence result.

\begin{theorem}\label{thm:isothermal}
Consider \eqref{eq:Eulerdamping} with subcritical initial condition
\eqref{eq:EDsub}. Then, the system has a global classical solution.
%such that $(\ln\rho,u)\in\mathcal{C}(\R^+;~\dot{W}^{1,\infty}(\R))$.
\end{theorem}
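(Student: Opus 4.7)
The plan is to convert the boundedness of $(r,s)$ obtained in the preceding proposition into the pointwise bounds on $\pa_x u$ and $\pa_x\ln\rho$ that are needed to extend a local classical solution globally in time. First I would verify that the subcritical hypothesis \eqref{eq:EDsub} is precisely what is needed so that $r_0(x), s_0(x)\geq 0$ everywhere. Using $\psi\equiv C$ and $\|\rho_0\|_{L^1}=1$, we have $\psi\star\rho_0\equiv C$, so
\[
r_0=\pa_x u_0-\sqrt{A}\,\frac{\pa_x\rho_0}{\rho_0}+C,\qquad
s_0=\pa_x u_0+\sqrt{A}\,\frac{\pa_x\rho_0}{\rho_0}+C,
\]
and the condition $\pa_x u_0\geq -C+\sqrt{A}\,|\pa_x\rho_0/\rho_0|$ is exactly the condition that both $r_0$ and $s_0$ are nonnegative. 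Assuming the initial data are such that $r_0,s_0\in L^\infty(\R)$ (which follows from the standing regularity assumptions on $(\rho_0,u_0)$ together with $\inf\rho_0>0$), the preceding proposition yields a uniform bound
\[
0\leq r(x,t),\ s(x,t)\leq M:=\max\{\|r_0\|_{L^\infty},\|s_0\|_{L^\infty},2C\},\qquad (x,t)\in\R\times\R_+.
\]

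Next I would translate this into an a priori $L^\infty$ control on the derivatives appearing in the system. From the identities $r+s=2\pa_x u+2C$ and $s-r=2\sqrt{A}\,\pa_x\ln\rho$, the bounds on $r,s$ yield
\[
|\pa_x u(x,t)|\leq M+C,\qquad |\pa_x\ln\rho(x,t)|\leq \frac{M}{\sqrt{A}},
\]
uniformly in $(x,t)$. In particular, $\rho$ does not generate vacuum nor blow up on any finite time interval, since along characteristics $(\ln\rho)^{\cdot}=-\pa_x u$, so $\rho$ stays in a bounded interval $[\rho_-(T),\rho_+(T)]\subset(0,\infty)$ on each $[0,T]$.

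The final step is to feed these uniform bounds into a standard continuation argument. The plan is to invoke local-in-time well-posedness of \eqref{eq:Eulerdamping} in an appropriate Sobolev class (say $H^s$ with $s>3/2$) together with the a priori estimate for the isothermal Euler-Alignment system recorded in Appendix \ref{sec:App-2}. That estimate controls the growth of the $H^s$-norm of $(u,\ln\rho)$ in terms of $\|\pa_x u\|_{L^\infty}$ and $\|\pa_x\ln\rho\|_{L^\infty}$, both of which we have just bounded uniformly in time. Applying Gronwall to the resulting differential inequality prevents finite-time blow-up of the higher-order norms, so the local solution extends to all $t\geq 0$.

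The main obstacle is the step translating the $L^\infty$ bounds on the first derivatives into control of higher Sobolev norms. Unlike the pressureless setting of Section~\ref{sec:EA}, here the pressure couples $\rho$ and $u$ through a genuinely hyperbolic system, so one cannot simply close estimates along a single family of characteristics; the two Riemann invariants propagate along distinct speeds $\lambda,\mu$. This is why the argument is restricted to $\psi\equiv C$: only then do the right-hand sides in \eqref{press-isothermal} decouple into an autonomous pair along each characteristic, giving the invariant-region structure. Extending the argument to general $\psi$ would require controlling the nonlocal terms $\sqrt{A\gamma}\rho^\theta(\psi_x\star\rho)$ and the cross-coupling through $\psi\star\rho$ along two distinct characteristic families, which is left open.
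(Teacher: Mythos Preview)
Your proposal is correct and follows essentially the same route as the paper: the paper notes that condition \eqref{eq:EDsub} is precisely $r_0,s_0\geq 0$, invokes the preceding proposition to bound $(r,s)$, observes that this bounds $\pa_x u$ and $\pa_x\ln\rho$, and then appeals to the a priori estimate of Appendix~\ref{sec:App-2} (Theorem~\ref{thm:p}) for global regularity. Your write-up makes each of these steps a bit more explicit (e.g., the identities $r+s=2\pa_x u+2C$ and $s-r=2\sqrt{A}\,\pa_x\ln\rho$, the no-vacuum argument), but the logical skeleton is identical.
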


\begin{remark}
Critical thresholds on Euler equation with local damping has been
discussed in \cite{LL09}, for general types of pressure. Our result
provides a larger subcritical region for the special case
\eqref{press-isothermal}.
\end{remark}

For the general non-local influence function $\psi$, the strategy does
not apply, due to the presence of the extra term
$\pm\sqrt{A\gamma}\rho^\theta(\psi_x\star\rho)$. In this case, $r=0$
and $s=0$ are not stationary states, and it is more delicate to
control the lower bound of $(r, s)$ in time. We leave the problem for
future investigation.

\begin{appendix}
\section{Global regularity}\label{sec:regularity}
In this part, we consider smoother subcritical initial data, and
prove that initial regularity persists globally in time, under suitable
assumptions on the influence function $\psi$ and the interaction potential $K$.

\subsection{Pressureless Euler equations with nonlocal forces}\label{sec:App-1}
We start with our main system \eqref{main-eq}.
\begin{theorem}\label{thm:regularity}
Let $s\geq0$ be an integer. Consider system \eqref{main-eq} with
smooth influence function $\psi$ satisfying
\bq\label{eq:phicond}
\psi\in L^1(\R)+\text{const},\quad\text{and}\quad x\pa_x\psi\in L^1(\R),
\eq
and potential $K$ such that 
\bq\label{eq:kcond}
\kk\in L^1(\R).
\eq
Suppose the initial data $(\rho_0,u_0)$ lie in the subcritical region, and
satisfy $\rho_0, \pa_xu_0\in H^s(\R)$.
Then, there exists a
 unique global solution $(\rho,u)$ such that
\[\rho \in \mc([0,T];H^s(\R)) \quad\text{and}\quad\pa_xu\in\mc([0,T];H^s(\R))\]
for any time $T$.
\end{theorem}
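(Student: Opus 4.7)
The plan is to couple a standard local well-posedness result in $H^s$ with a global \emph{a priori} bound coming from the threshold analysis of Sections~\ref{sec:EA}--\ref{sec:general}. I would first establish local existence of $H^s$ solutions to the quasilinear system \eqref{main-eq} by the usual mollification/energy argument, together with a continuation criterion: the solution persists in $H^s$ as long as $\|\rho\|_{L^\infty}$, $\|\pa_x u\|_{L^\infty}$ and $\|u\|_{L^\infty}$ remain finite. The first two are already uniformly controlled on $\R_+$ in the subcritical regime by Corollary~\ref{cor11} and Proposition~\ref{prop2}, so the task reduces to (i) controlling $\|u\|_{L^\infty}$ and (ii) closing a Gr\"onwall loop for the higher Sobolev norms.

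\textbf{Higher-order energy estimates.} Setting $v=\pa_x u$ and $E_s(t):=\|\rho(\cdot,t)\|_{H^s}^2+\|v(\cdot,t)\|_{H^s}^2$, I would apply $\pa_x^k$ for $0\le k\le s$ to \eqref{main-eq:a} and to the equation
\[
\pa_t v+u\pa_x v+v^2 \;=\; \pa_x I[\rho,u]-\kk\star\rho, \qquad I[\rho,u]:=\psi\star(\rho u)-u(\psi\star\rho),
\]
test in $L^2$ against $\pa_x^k\rho$ and $\pa_x^k v$, and sum. The transport and nonlinear contributions $u\pa_x\rho,\rho\pa_x u,u\pa_x v,v^2$ are handled by integration by parts, Kato--Ponce commutator estimates, and Moser's product inequality, producing the schematic bound
\[
\frac{d}{dt}E_s(t)\;\ls\;\bigl(1+\|\rho\|_{L^\infty}+\|v\|_{L^\infty}+\|u\|_{L^\infty}\bigr)E_s(t)+\mathcal{R}(t),
\]
with $\mathcal{R}(t)$ collecting the contributions from $\pa_x I$ and $\kk\star\rho$.

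\textbf{Nonlocal contributions.} This is where hypotheses \eqref{eq:phicond}--\eqref{eq:kcond} enter. Splitting $\psi=\psi_1+c$ with $\psi_1\in L^1(\R)$, mass conservation $\int\rho\equiv1$ and momentum conservation $\int\rho u\equiv\int\rho_0u_0$ collapse the constant piece into a bounded drift plus a linear damping $-cu$. The $L^1$ part is bounded in $H^s$ by Young's convolution inequality, $\|\psi_1\star f\|_{H^s}\le\|\psi_1\|_{L^1}\|f\|_{H^s}$, applied with $f=\rho u$ and $f=\rho$, combined with the Moser estimate $\|\rho u\|_{H^s}\ls\|\rho\|_{L^\infty}\|v\|_{H^{s-1}}+\|u\|_{L^\infty}\|\rho\|_{H^s}$. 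The condition $x\pa_x\psi\in L^1$ supplies the decay of $\pa_x\psi$ at infinity needed to treat the derivative terms in $\pa_x I$ analogously. Finally, $\kk\in L^1$ gives $\|\kk\star\pa_x^k\rho\|_{L^2}\le\|\kk\|_{L^1}\|\pa_x^k\rho\|_{L^2}$ directly.

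\textbf{$L^\infty$ bound on $u$ and conclusion.} The most delicate step, and in my view the principal obstacle, is an \emph{a priori} bound on $\|u\|_{L^\infty}$, since the momentum equation and the estimates above contain $u$ itself rather than $\pa_x u$. This I would obtain along characteristics: writing $\tfrac{d}{dt}u(x(a,t),t)=I[\rho,u]-\pa_x K\star\rho$, the alignment drift is bounded by $2\psi_M\|u\|_{L^\infty}$, while $\|\pa_x K\|_{L^\infty}$ is finite because $\pa_x^2K\in L^1$ forces $\pa_x K$ to be of bounded variation and hence bounded. Gr\"onwall then yields at worst exponential-in-time growth of $\|u\|_{L^\infty}$, which is enough. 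Substituting back into the energy inequality, all coefficients lie in $L^1_{\mathrm{loc}}(\R_+)$, and a second Gr\"onwall produces a locally uniform bound on $E_s(t)$; continuity in time $\rho,\pa_x u\in\mc([0,T];H^s)$ then follows by a standard density/continuity argument.
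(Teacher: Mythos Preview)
Your overall architecture (local existence plus a continuation criterion fed by the threshold bounds) matches the paper's, but there is a genuine gap in how you handle the alignment term, and it is precisely the point where the hypothesis $x\pa_x\psi\in L^1$ does its work.

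By rewriting the alignment as $I[\rho,u]=\psi\star(\rho u)-u(\psi\star\rho)$ you introduce products involving $u$ itself. Your Moser-type bound on $\|\rho u\|_{H^s}$ and your Gr\"onwall step then both require $\|u\|_{L^\infty}<\infty$, which you try to recover along characteristics. But the theorem assumes only $\pa_x u_0\in H^s(\R)$, not $u_0\in L^\infty(\R)$; with $\pa_x u_0$ merely in $L^2$, $u_0$ can grow like $|x|^{1/2}$ and your characteristic argument never starts. Even granting $u_0\in L^\infty$, the Moser estimate you wrote, $\|\rho u\|_{H^s}\ls\|\rho\|_{L^\infty}\|\pa_x u\|_{H^{s-1}}+\|u\|_{L^\infty}\|\rho\|_{H^s}$, is not the standard product inequality and would need a separate justification (the usual version produces $\|u\|_{H^s}$, which again involves $\|u\|_{L^2}$).

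The paper sidesteps all of this by never breaking the difference $u(y)-u(x)$. In its energy estimate (Theorem~\ref{thm:globalbound}) the alignment contribution is bounded directly in Lemma~\ref{lem:2}: after distributing $\pa_x^{s+1}$ and integrating by parts in $y$, the only term that still carries the undifferentiated velocity is of the form
\[
\int_\R \pa_y\psi(x-y)\,(u(y)-u(x))\,\pa_y^s\rho(y)\,dy,
\]
and here one uses $|u(y)-u(x)|\le |x-y|\,\|\pa_x u\|_{L^\infty}$ so that the kernel becomes $|x-y|\,|\pa_x\psi(x-y)|$, whose $L^1$ norm is exactly $\|x\pa_x\psi\|_{L^1}$. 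That is the true role of \eqref{eq:phicond}, not merely ``decay of $\pa_x\psi$ at infinity''. With this device the Gr\"onwall inequality closes in terms of $\|\rho\|_{L^\infty}$ and $\|\pa_x u\|_{L^\infty}$ alone, with no appearance of $\|u\|_{L^\infty}$, and the subcritical bounds from Sections~\ref{sec:EA}--\ref{sec:general} finish the proof.
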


\begin{remark}
The condition \eqref{eq:phicond} on $\psi$ is valid for constant influence function
$\psi\equiv1$, as well as the typical Cucker-Smale weight
$\psi(x)=(1+x^2)^{-\gamma}$, with $\gamma>1/2$. 
The condition on $K$ is valid for Newtonian potential $K=\frac{k}{2}|x|$.
\end{remark}

As subcritical initial data imply global in time bounds on
$\|\rho\|_{L^\infty}$ and $\|\pa_xu\|_{L^\infty}$, it suffices to
prove the following estimate.

\begin{theorem}\label{thm:globalbound}
Let $s\geq0$ be an integer. Define
$Y(t):=\|\pa_x\u(\cdot,t)\|_{H^s}^2+\|\rho(\cdot,t)\|_{H^s}^2$. If
the influence function $\psi$ and the potential $K$ satisfy \eqref{eq:phicond} 
and \eqref{eq:kcond}, respectively, then
\begin{equation}\label{eq:Yprop}
Y(T)\lesssim Y(0)
\exp\left[\int_0^T\left(1+\|\rho(\cdot,t)\|_{L^\infty}+\|\grad\u(\cdot,t)\|_{L^\infty}
\right)dt\right].
\end{equation}
\end{theorem}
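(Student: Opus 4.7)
\textbf{Proof plan for Theorem \ref{thm:globalbound}.}

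The plan is to carry out standard $H^s$ energy estimates on the pair $(\rho,v)$ with $v:=\pa_x u$, and then close a Gr\"onwall inequality involving only $\|\rho\|_{L^\infty}$ and $\|v\|_{L^\infty}$ (whose global boundedness in the subcritical region is guaranteed by Theorems \ref{thm11}--\ref{thm13}). Differentiating the momentum equation \eqref{main-eq:b} in $x$, the system for $(\rho,v)$ reads
\begin{align*}
&\pa_t\rho+u\pa_x\rho=-\rho v,\\
&\pa_t v+u\pa_x v+v^2+v(\psi\star\rho)=J-\kk\star\rho,\qquad J(x):=\int_\R \pa_x\psi(x-y)(u(y)-u(x))\rho(y)\,dy.
\end{align*}
For each $0\leq k\leq s$, I would apply $\pa_x^k$ to both equations and test against $\pa_x^k\rho$ and $\pa_x^k v$, respectively, then sum.

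The transport terms $u\pa_x\rho$ and $u\pa_x v$ symmetrize after one integration by parts, leaving a factor $\pa_x u=v$ controlled by $\|v\|_{L^\infty}$; the commutators $[\pa_x^k,u]\pa_x(\cdot)$ and the product expansion of $\pa_x^{k+1}(\rho u)$ in the continuity equation are dispatched by the one-dimensional Moser/Kato--Ponce product estimate after noting that $\pa_x^{k+1}u=\pa_x^k v$ is controlled by $\|v\|_{H^s}$ for $k\leq s$. The semilinear contributions $v^2$ and $v(\psi\star\rho)$ are likewise Moser products; for the convolution $\psi\star\rho$, the splitting $\psi=\psi_1+c$ with $\psi_1\in L^1$ supplied by \eqref{eq:phicond} ensures that $\pa_x^k(\psi\star\rho)=\psi_1\star\pa_x^k\rho$ for $k\geq 1$, and Young's inequality gives $\|\psi\star\rho\|_{\dot H^s}\leq \|\psi_1\|_{L^1}\|\rho\|_{\dot H^s}$. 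The interaction term is handled directly by Young with \eqref{eq:kcond}: $\pa_x^k(\kk\star\rho)=\kk\star\pa_x^k\rho$, with $\|\kk\star\pa_x^k\rho\|_{L^2}\leq \|\kk\|_{L^1}\|\pa_x^k\rho\|_{L^2}$.

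The main obstacle is the nonlocal alignment integral $J$, the only term in which $u$ itself (rather than its derivative) appears and which therefore threatens to drag $\|u\|_{L^\infty}$ into the final estimate. To sidestep this, I would invoke the mean-value representation $u(x+z)-u(x)=\int_0^1 z\,v(x+tz)\,dt$ together with the oddness of $\pa_x\psi$ (a consequence of the symmetry of $\psi$) to rewrite, after the change of variable $z=y-x$,
\[
J(x)=-\int_0^1\!\int_\R z\,\pa_x\psi(z)\,v(x+tz)\,\rho(x+z)\,dz\,dt.
\]
Hypothesis \eqref{eq:phicond} makes $z\,\pa_x\psi(z)$ an admissible $L^1$-convolution weight. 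Distributing $\pa_x^k$ over the shifted product $v(\cdot+tz)\rho(\cdot+z)$, applying the Moser inequality translation-invariantly in $L^2$, and using Minkowski's integral inequality in $(t,z)$ then yields
\[
\|\pa_x^k J\|_{L^2}\ls \|z\,\pa_x\psi\|_{L^1}\bigl(\|v\|_{L^\infty}\|\rho\|_{H^s}+\|\rho\|_{L^\infty}\|v\|_{H^s}\bigr),
\]
without any occurrence of $\|u\|_{L^\infty}$.

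Combining all contributions and summing over $0\leq k\leq s$ yields the differential inequality
\[
\frac{d}{dt}Y(t)\ls\bigl(1+\|\rho(\cdot,t)\|_{L^\infty}+\|v(\cdot,t)\|_{L^\infty}\bigr)\,Y(t),
\]
and Gr\"onwall's inequality delivers the exponential bound \eqref{eq:Yprop}.
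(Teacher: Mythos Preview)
Your proposal is correct and follows the same overall strategy as the paper: $H^s$ energy estimates on $(\rho,\pa_x u)$, Kato--Ponce/Moser commutator bounds for the transport and product terms, Young's inequality for $\kk\star\rho$ via \eqref{eq:kcond}, and a Gr\"onwall closure. The one place where your execution differs is the alignment term. The paper (Lemma~\ref{lem:2}) expands $\pa_x^{s+1}$ of the full integral by Leibniz, integrates by parts in $y$ to transfer the top-order $\psi$-derivative onto $(u(y)-u(x))\rho(y)$, and in the residual term uses the Lipschitz bound $|u(y)-u(x)|\le|x-y|\|\pa_xu\|_{L^\infty}$ so that the weight $x\pa_x\psi\in L^1$ from \eqref{eq:phicond} closes the estimate. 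You instead apply the integral mean-value formula at the outset to rewrite $J$ entirely in terms of $v$, and then run a translation-invariant Moser product estimate on $v(\cdot+tz)\rho(\cdot+z)$ under the $(t,z)$-integral. Both routes exploit $x\pa_x\psi\in L^1$ in exactly the same way and produce the same bound; yours is slightly more streamlined (no four-term decomposition), at the minor cost of handling a shifted product, which translation invariance of the $L^p$ and Gagliardo--Nirenberg norms makes harmless.
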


\begin{proof}
We start with acting operator
$\Lambda^s$ on equation \eqref{main-eq}$_1$ and integrate by parts against
$\Lambda^s\rho$. Here $\Lambda:=(I-\Delta)^{1/2}$ is the
pseudo-differential operator. We also denote $(\cdot,\cdot)$ as $L^2$
inner product in $\R$.

The evolution of the $H^s$ norm reads
\[\frac{1}{2}\frac{d}{dt}\|\rho(\cdot,t)\|^2_{H^s} = 
-\left(\left[\Lambda^s\div,\u\right]\rho,\Lambda^s\rho\right)
+\frac{1}{2}\left(\Lambda^s\rho,(\div\u)\Lambda^s\rho\right).\]
We postpone the proof of the following commutator estimate to
Lemma \ref{lem:1}.
\[\|\left[\Lambda^s\div,\u\right]\rho\|_{L^2}\lesssim\|\grad\u\|_{L^\infty}\|\rho\|_{H^s}
+\|\pa_xu\|_{H^s}\|\rho\|_{L^{\infty}}.\]
With this, we deduce the following estimate
\[
\frac{d}{dt}\|\rho(\cdot,t)\|^2_{H^s} \lesssim
\left[\|\rho\|_{L^\infty}+\|\grad\u\|_{L^\infty}\right]
\left(\|\rho\|_{H^s}^2+\|\pa_xu\|_{H^s}^2\right).
\]

Similarly, for equation \eqref{main-eq:b}, we have
\begin{align*}
\frac{1}{2}\frac{d}{dt}\|\pa_x\u(\cdot,t)\|^2_{H^s}& = 
-\big(\left[\Lambda^s\pa_x,u\right]\pa_x\u,\Lambda^s\pa_x\u\big)
+\frac{1}{2}\left(\Lambda^s\pa_x\u,(\div\u)\Lambda^s\pa_x\u\right)\\
& +\left(\Lambda^s\pa_x\u, \Lambda^s\pa_x
\int_{\R}\psi(\x-\y)(\u(\y)-\u(\x))\rho(\y)d\y\right)\\
& +\left(\Lambda^s\pa_x\u, \Lambda^s\pa_x
\int_{\R}\grad K(\x-\y)\rho(\y)d\y\right).
\end{align*}

For the commutator, we obtain the same estimate from Lemma
\ref{lem:1}.
\[\|\left[\Lambda^s\pa_x,u\right]\pa_x\u\|_{L^2}\lesssim
\|\pa_x\u\|_{L^{\infty}}\|\pa_x\u\|_{H^s}.\]

For the alignment term, we claim and will prove in Lemma
\ref{lem:2} that
\bq\label{eq:alignprop}
\begin{split}
&\left(\Lambda^s\pa_x\u, \Lambda^s\pa_x
\int_{\R}\psi(\x-\y)(\u(\y)-\u(\x))\rho(\y)d\y\right)\\
&~\quad\lesssim
\|\pa_x\u\|_{H^s}\big[\|\rho\|_{H^s}\|\grad\u\|_{L^\infty}+
(1+\|\rho\|_{L^\infty})\|\pa_x\u\|_{H^s}\big].
\end{split}
\eq

For the attraction-repulsion term, as $\kk\in L^1(\R)$,
\[
\left(\Lambda^s\pa_x\u, \Lambda^s\pa_x
\int_{\R}\grad K(\x-\y)\rho(\y)d\y\right)\lesssim
\|\pa_x\u\|_{H^s}\|\pa_x^2 K\|_{L^1}\|\Lambda^s\rho\|_{L^2}
\lesssim\|\pa_x\u\|_{H^s}\|\rho\|_{H^s}.
\]

Putting everything together, we obtain
\[
\frac{d}{dt}\|\pa_x\u(\cdot,t)\|_{H^s}^2
\lesssim\left[1+\|\rho\|_{L^\infty}+\|\grad\u\|_{L^\infty}\right]
\left(\|\rho\|_{H^s}^2+\|\pa_x\u\|_{H^s}^2\right).
\]

A Gronwall's inequality implies \eqref{eq:Yprop}.
\end{proof}

Next, we provide a short proof of the Kato-Pounce type commutator
estimate \cite{KP} which is used in the regularity estimates.
\begin{lemma}[Commutator estimate]\label{lem:1} 
  Let $f, \pa_x u \in (L^\infty\cap H^s)(\R)$. Take $s$ to be an non-negative integer. Then,
\[\|[\pa_x^{s+1},u]f\|_{L^2}\lesssim\|f\|_{L^\infty}\|\pa_xu\|_{H^s}+\|\pa_xu\|_{L^\infty}\|f\|_{H^s}.\]
\end{lemma}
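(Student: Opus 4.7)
The plan is to expand the commutator via the Leibniz rule (we are in the favorable integer-$s$ regime, so no Littlewood-Paley machinery is needed) and then use Gagliardo--Nirenberg interpolation to distribute the derivatives in the intermediate terms.

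First, I would write
\[
[\partial_x^{s+1},u]f=\partial_x^{s+1}(uf)-u\,\partial_x^{s+1}f=\sum_{k=1}^{s+1}\binom{s+1}{k}(\partial_x^k u)(\partial_x^{s+1-k}f)
\]
and, anticipating the substitution, set $g:=\partial_x u\in L^\infty\cap H^s$ and $j:=k-1$, $m:=s+1-k$, so that $j+m=s$ with $0\le j,m\le s$, and the task reduces to bounding $\|\partial_x^j g\cdot \partial_x^m f\|_{L^2}$ by $\|g\|_{L^\infty}\|f\|_{H^s}+\|f\|_{L^\infty}\|g\|_{H^s}$.

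The two endpoint terms are immediate from H\"older: when $j=0$ we get $\|g\|_{L^\infty}\|\partial_x^s f\|_{L^2}$, and when $j=s$ we get $\|f\|_{L^\infty}\|\partial_x^s g\|_{L^2}$. For an interior index $0<j<s$, I would apply H\"older with the conjugate pair $p=2s/j$, $q=2s/m$ (so that $1/p+1/q=1/2$), followed by the 1D Gagliardo--Nirenberg inequalities
\[
\|\partial_x^j g\|_{L^{2s/j}}\lesssim \|g\|_{L^\infty}^{1-j/s}\|\partial_x^s g\|_{L^2}^{j/s},\qquad \|\partial_x^m f\|_{L^{2s/m}}\lesssim \|f\|_{L^\infty}^{1-m/s}\|\partial_x^s f\|_{L^2}^{m/s}.
\]
Multiplying and using $1-j/s=m/s$ and $1-m/s=j/s$ regroups the right-hand side as
\[
\bigl(\|g\|_{L^\infty}\|\partial_x^s f\|_{L^2}\bigr)^{m/s}\bigl(\|f\|_{L^\infty}\|\partial_x^s g\|_{L^2}\bigr)^{j/s},
\]
and a Young inequality with exponents $s/m$ and $s/j$ delivers
\[
\|\partial_x^j g\cdot \partial_x^m f\|_{L^2}\lesssim \|g\|_{L^\infty}\|f\|_{H^s}+\|f\|_{L^\infty}\|g\|_{H^s}.
\]

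Summing over $k$ and reinstating $g=\partial_x u$ yields the claim. The main technical point to handle with care is simply the bookkeeping of the Gagliardo--Nirenberg exponents so that the two extreme scalings $(\|g\|_{L^\infty},\|\partial_x^s f\|_{L^2})$ and $(\|f\|_{L^\infty},\|\partial_x^s g\|_{L^2})$ come out correctly in all intermediate terms; everything else is H\"older plus Young.
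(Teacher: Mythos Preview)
Your proposal is correct and follows essentially the same route as the paper's own proof: Leibniz expansion of the commutator, H\"older with the conjugate exponents $2s/j$ and $2s/m$, the Gagliardo--Nirenberg interpolation $\|\partial_x^j g\|_{L^{2s/j}}\lesssim\|g\|_{L^\infty}^{1-j/s}\|\partial_x^s g\|_{L^2}^{j/s}$, and a final Young inequality. The only cosmetic difference is that the paper runs the sum over a single index without separating out the endpoint terms, while you treat $j=0$ and $j=s$ explicitly; the content is the same.
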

\begin{remark}
Take $f=\rho$, we get 
\[\|[\pa_x^{s+1},u]\rho\|_{L^2}\lesssim\|\rho\|_{L^\infty}\|\pa_xu\|_{H^s}+\|\pa_xu\|_{L^\infty}\|\rho\|_{H^s}.\]
Take $f=\pa_xu$, we get
\[\|[\pa_x^{s+1},u]\pa_xu\|_{L^2}\lesssim\|\pa_xu\|_{L^\infty}\|\pa_xu\|_{H^s}.\]
These imply the two commutator inequalities in  Theorem \ref{thm:globalbound}.
\end{remark}

\begin{proof}[Proof of Lemma \ref{lem:1}]
We first rewrite the commutator and use appropriate H\"older
inequality to get
\begin{align*}
\|[\pa_x^{s+1},u]f\|_{L^2}&=\left\|\pa_x^{s+1}(uf)-u\cdot\pa_x^{s+1}f\right\|_{L^2}
\leq\sum_{i=0}^s{s+1\choose i}\|\pa_x^{s+1-i}u\cdot\pa_x^if\|_{L^2}\\
&\leq \sum_{i=0}^s{s+1\choose i}\|\pa_x^{s-i}(\pa_xu)\|_{L^{\frac{2s}{s-i}}}\|\pa_x^if\|_{L^{\frac{2s}{i}}}.
\end{align*}
Next, we apply the following type of Gagliardo-Nirenberg interpolation inequality
\[\|\pa_x^jg\|_{L^{\frac{2s}{j}}}\lesssim\|\pa_x^sg\|_{L^2}^{j/s}\|g\|_{L^\infty}^{1-\frac{j}{s}},\qquad
j=0,1,\cdots,s.\]
Taking $(g,j)=(\pa_xu,s-i)$ and $(g,j)=(f,i)$, we continue the estimate
\begin{align*}
\|[\pa_x^{s+1},u]f\|_{L^2}
&\lesssim\sum_{i=0}^s{s+1\choose i}
(\|\pa_x^s(\pa_xu)\|_{L^2}\|f\|_{L^\infty})^{\frac{s-i}{s}}
(\|\pa_x^sf\|_{L^2}\|\pa_xu\|_{L^\infty})^{\frac{i}{s}}\\
&\lesssim
\|\pa_x^s(\pa_xu)\|_{L^2}\|f\|_{L^\infty}+
\|\pa_x^sf\|_{L^2}\|\pa_xu\|_{L^\infty}.
\end{align*}

The last estimate is due to Young's inequality. This ends the proof.
\end{proof}

We are left with the final estimate \eqref{eq:alignprop}.

\begin{lemma}\label{lem:2}
If the influence function $\psi$ satisfies \eqref{eq:phicond}, then
\[\left\|\pa_x^{s+1}\int_\R\psi(x-y)(u(y)-u(x))\rho(y)dy\right\|_{L^2}\lesssim
(\|\rho\|_{L^\infty} + 1)\|\pa_xu\|_{H^s}+\|\pa_xu\|_{L^\infty}\|\rho\|_{H^s}.\]
\end{lemma}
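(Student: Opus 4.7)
The plan is to isolate a single dangerous high-derivative piece that is salvaged by the finite-difference structure $u(y)-u(x)$ together with the moment condition $x\pa_x\psi\in L^1$. As a first step I would write $\psi=\psi_1+c$ with $\psi_1\in L^1(\R)$ and $c\in\R$, as permitted by \eqref{eq:phicond}. The constant part contributes
\[
c\int_{\R}[u(y)-u(x)]\rho(y)\,dy \,=\, c\int_{\R}u\rho\,dy - cu(x),
\]
and since the first summand is spatially constant, its $(s+1)$st derivative equals $-c\,\pa_x^{s+1}u$, whose $L^2$ norm is $\ls\|\pa_xu\|_{H^s}$. This accounts for the $+1$ on the right-hand side of the claim, reducing matters to controlling $\pa_x^{s+1}I_1$, where
\[
I_1(x) \,:=\, \psi_1\star(u\rho)(x) \,-\, u(x)\,(\psi_1\star\rho)(x).
\]

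Setting $w:=\psi_1\star\rho$ and using $\pa_x^{s+1}(fg)=f\pa_x^{s+1}g+[\pa_x^{s+1},f]g$ together with $\pa_x^{s+1}(\psi_1\star g)=\psi_1\star\pa_x^{s+1}g$, the plan is to arrive at the decomposition
\[
\pa_x^{s+1}I_1 \,=\, T(x) \,+\, \psi_1\star[\pa_x^{s+1},u]\rho \,-\, [\pa_x^{s+1},u]w, \qquad T(x):=\int_{\R}\psi_1(x-y)[u(y)-u(x)]\pa_y^{s+1}\rho(y)\,dy,
\]
obtained because the ``dangerous'' zeroth-order contributions $\psi_1\star(u\pa_x^{s+1}\rho)$ and $u\,\psi_1\star\pa_x^{s+1}\rho$ cancel into the single finite-difference integral $T$. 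The two commutator terms are routine: $\|\psi_1\star[\pa_x^{s+1},u]\rho\|_{L^2}$ is controlled by Young's inequality combined with Lemma \ref{lem:1}, while $[\pa_x^{s+1},u]w$ is controlled by Lemma \ref{lem:1} applied to $w$, using the trivial bounds $\|w\|_{L^\infty}\ls\|\rho\|_{L^\infty}$ and $\|w\|_{H^s}\ls\|\rho\|_{H^s}$. Both yield exactly the Moser-type shape $\|\rho\|_{L^\infty}\|\pa_xu\|_{H^s}+\|\pa_xu\|_{L^\infty}\|\rho\|_{H^s}$.

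The term $T$ is what I expect to be the genuine obstacle, because taken at face value it requires $\pa_y^{s+1}\rho$ in $L^2$, one derivative more than we are allowed to spend. The plan is to offload that derivative by integrating by parts in $y$, using
\[
\pa_y\{\psi_1(x-y)[u(y)-u(x)]\} \,=\, -\psi_1'(x-y)[u(y)-u(x)] + \psi_1(x-y)u'(y),
\]
which splits $T$ into two convolution-type integrals against $\pa_y^s\rho$. The summand containing $\psi_1(x-y)u'(y)$ is estimated directly by Young's inequality and produces $\ls\|\pa_xu\|_{L^\infty}\|\rho\|_{H^s}$. The summand containing $\psi_1'(x-y)[u(y)-u(x)]$ is exactly where the hypothesis $x\pa_x\psi\in L^1$ enters: the mean-value bound $|u(y)-u(x)|\leq\|\pa_xu\|_{L^\infty}|y-x|$ dominates the kernel pointwise by $\|\pa_xu\|_{L^\infty}|(x-y)\psi_1'(x-y)|$, whose $L^1$ norm equals $\|x\pa_x\psi\|_{L^1}$, and a final application of Young delivers an $L^2$ bound $\ls\|\pa_xu\|_{L^\infty}\|\rho\|_{H^s}$. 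Summing the three pieces reproduces the stated inequality, and the derivation makes transparent the precise role of the moment-type decay in \eqref{eq:phicond}: it buys back the derivative that was spent in integrating by parts inside $T$.
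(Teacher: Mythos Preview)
Your proof is correct and coincides with the paper's argument: the paper's Terms I and III are precisely your two commutators $[\pa_x^{s+1},u](\psi_1\star\rho)$ and $\psi_1\star[\pa_x^{s+1},u]\rho$ written out via Leibniz, its Term IV is exactly your $T$, and its single integration by parts on IV together with the bound $|u(y)-u(x)|\le\|\pa_xu\|_{L^\infty}|x-y|$ and the hypothesis $x\pa_x\psi\in L^1$ is identical to your treatment of $T$. The only difference is packaging---you name the Leibniz remainders as commutators and invoke Lemma \ref{lem:1}, whereas the paper re-runs the Gagliardo--Nirenberg interpolation explicitly.
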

\begin{proof}
We first assume $\psi\in L^1(\R)$. Estimate the left hand side as follows.
\[
\text{LHS} \leq
\sum_{i=0}^s{s+1\choose
  i}\|\pa_x^{s+1-i}u\cdot\pa_x^i(\psi\star\rho)\|_{L^2}
+\left\|\int_\R\pa_x^{s+1}\psi(x-y)(u(y)-u(x))\rho(y)dy\right\|_{L^2}
=\textrm{I}+\textrm{II}.
\]
We control the first term in the similar way as Lemma
\ref{lem:1}, along with the assumption that $\psi\in L^1(\R)$.
\begin{align*}
\textrm{I}&\leq \sum_{i=0}^s{s+1\choose
  i}\|\pa_x^{s-i}(\pa_xu)\|_{L^{\frac{2s}{s-i}}}\|\pa_x^i\rho\|_{L^{\frac{2s}{i}}}\|\psi\|_{L^1}
\lesssim \|\pa_x^s(\pa_xu)\|_{L^2}\|f\|_{L^\infty}+
\|\pa_x^sf\|_{L^2}\|\pa_xu\|_{L^\infty}.
\end{align*} 

For the second term, we again break it into two terms (again, suppressing the $t$-dependence),
\begin{align*}
\textrm{II}&=\left\|\int_\R\pa_y^{s+1}\psi(x-y)(u(y)-u(x))\rho(y)dy\right\|_{L^2}
=\left\|\int_\R\psi(x-y)\pa_y^{s+1}\left[(u(y)-u(x))\rho(y)\right]dy\right\|_{L^2}\\
&\leq\left\|\int_\R\psi(x-y) \sum_{i=0}^s{s+1\choose
  i}\pa_y^{s+1-i}u(y)\pa_y^i\rho(y)dy\right\|_{L^2}
+\left\|\int_\R\psi(x-y)(u(y)-u(x))\pa_y^{s+1}\rho(y)dy\right\|_{L^2}\\
&=\textrm{III}+\textrm{IV}.
\end{align*} 

The third term can be controlled by Lemma \ref{lem:1} after applying
Young's inequality
\[\textrm{III}\leq\|\psi\|_{L^1}\sum_{i=0}^s{s+1\choose
  i}\|\pa_x^{s+1-i}u\cdot\pa_x^i\rho\|_{L^2}
\lesssim \|\pa_x^s(\pa_xu)\|_{L^2}\|\rho\|_{L^\infty}+
\|\pa_x^s\rho\|_{L^2}\|\pa_xu\|_{L^\infty}.\]

Finally, for the last term, we have
\begin{align*}
\textrm{IV}&=\left\|\int_\R\pa_y\left[\psi(x-y)(u(y)-u(x))\right]\pa_y^s\rho(y,t)dy\right\|_{L^2}\\
&\leq
\left\|\int_\R\pa_y\psi(x-y)(u(y)-u(x))\pa_y^s\rho(y)dy\right\|_{L^2}
+\left\|\int_\R\psi(x-y)\pa_yu(y)\pa_y^s\rho(y)dy\right\|_{L^2}\\
&\leq
\|x\pa_x\psi\|_{L^1}\|\pa_xu\|_{L^\infty}\|\pa_x^s\rho\|_{L^2}
+\|\psi\|_{L^1}\|\pa_xu\|_{L^\infty}\|\pa_x^s\rho\|_{L^2}.
\end{align*} 

For $\psi\in L^1+const$, it is easy to check that for constant $c$,
\[\pa_x^{s+1}\int_\R c(u(y,t)-u(x,t))\rho(y,t)dy=-c\pa_x^{s+1}u(x).\]
Thus, we conclude with the desired estimate.
\end{proof}

%%%%%%%%%%%%%%%%%%%%%%%%%%%%%%%%%%%%%%%%%%%%%%%%%%%%%%%%%%

\subsection{Isothermal Euler equations with nonlocal dissipation}\label{sec:App-2}
In this part, we study the global regularity for the system
\eqref{press} with the pressure law $p(\rho) = \rho$. The system can be
rewritten in the following form.
\begin{align}\label{re-IE}
\begin{aligned}
&\pa_t n + u \pa_x n = - \pa_x u, \quad n := \ln \rho,\cr
&\pa_t u + u \pa_x u + \pa_x n = \int_{\R} \psi(x-y)(u(y,t) - u(x,t))
\rho(y,t) dy.
\end{aligned}
\end{align}

\begin{theorem}\label{thm:p} Let $s \geq 0$ be an integer. Consider
  the system \eqref{re-IE} with the influence function $\psi$ satisfying
\bq\label{eq:psicondpressure}
\pa_x \psi \in W^{s-1,\infty}(\R), \quad |x|^{1/2} \pa_x^\alpha \psi \in L^2(\R) \quad \mbox{ for } 1 \leq \alpha \leq s+1.
\eq
Suppose the initial data $(n_0:=\ln \rho_0,u_0)$ satisfy $\pa_x n_0 \in H^s(\R)$, and $\pa_x u_0 \in H^s(\R)$. If the solutions $(n,u)$ have the following global in time bounds
\[
\sup_{0 \leq t \leq T}\lt(\|\pa_xn\|_{L^\infty} + \|\pa_xu\|_{L^\infty}\rt) < \infty,
\]
Then there exists a unique global solution $(n,u)$ such that
\[
\pa_x n \in \mc([0,T];H^s(\R)) \quad \mbox{and} \quad \pa_x u \in \mc([0,T];H^s(\R)),
\]
for any time $T$.
\end{theorem}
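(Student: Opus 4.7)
The plan is to carry out a high-order energy estimate on $(\pa_x^{s+1}n,\pa_x^{s+1}u)$ in $L^2$, then close a Gronwall inequality using only the assumed $L^\infty$ bounds on $\pa_xn$ and $\pa_xu$. The main new feature compared to the pressureless case of Theorem \ref{thm:regularity} is the pressure coupling $\pa_xn$ and $\pa_xu$ on the right-hand sides of \eqref{re-IE}, which do not decouple under differentiation. I expect to handle this by a symmetrization argument: apply $\pa_x^{s+1}$ to both equations and take $L^2$ inner products with $\pa_x^{s+1}n$ and $\pa_x^{s+1}u$ respectively. After adding, the two highest-order cross terms combine as $\int_\R \pa_x[(\pa_x^{s+1}n)(\pa_x^{s+1}u)]\,dx = 0$, so the pressure cancels at top order. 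Before that, one also needs to convert $\rho$-dependent quantities to $n$-dependent ones; since $\pa_x n\in L^\infty$ and mass is conserved, a transport estimate on the continuity equation gives $\|n\|_{L^\infty}$ bounded on $[0,T]$ and hence $\|\rho\|_{L^\infty}$ as well, which will be used freely below.

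The transport contributions $u\pa_x^{s+2}n$ and $u\pa_x^{s+2}u$ are treated by an integration by parts (creating a $\pa_xu$ factor controlled in $L^\infty$) together with the Kato-Ponce commutator estimate of Lemma \ref{lem:1} applied to $[\pa_x^{s+1},u]\pa_xn$ and $[\pa_x^{s+1},u]\pa_xu$. Both are bounded by $(\|\pa_xn\|_{L^\infty}+\|\pa_xu\|_{L^\infty})(\|\pa_xn\|_{H^s}+\|\pa_xu\|_{H^s})$, which is exactly the structure needed to close Gronwall.

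The hard part will be the nonlocal alignment term $\pa_x^{s+1}\!\int_\R\psi(x-y)(u(y)-u(x))\rho(y)\,dy$. Unlike in Lemma \ref{lem:2}, I no longer have $\rho\in H^s$; only $\rho\in L^1\cap L^\infty$ and $\pa_x\rho=\rho\pa_xn$ with $\pa_xn\in L^\infty$. I would expand by Leibniz, splitting according to whether a given $x$-derivative hits $\psi(x-y)$ or $u(x)$. Terms where at most $s$ derivatives fall on $\psi$ can be written as convolutions $(\pa_x^\alpha\psi)\star\rho$ with $1\leq\alpha\leq s$, which by the assumption $\pa_x\psi\in W^{s-1,\infty}(\R)$ are bounded in $L^\infty$ by $\|\pa_x^\alpha\psi\|_{L^\infty}\|\rho\|_{L^1}$; these pair with lower-order derivatives of $u$ in $L^2$. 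The critical term is the one where all $s+1$ derivatives hit $\psi$, namely $\int_\R\pa_x^{s+1}\psi(x-y)(u(y)-u(x))\rho(y)\,dy$. Here I would exploit the Lipschitz bound $|u(y)-u(x)|\leq\|\pa_xu\|_{L^\infty}|y-x|$ together with the weighted hypothesis $|x|^{1/2}\pa_x^{s+1}\psi\in L^2(\R)$: rewrite the integrand as $|y-x|^{1/2}|\pa_x^{s+1}\psi(x-y)|\cdot|y-x|^{1/2}\rho(y)\|\pa_xu\|_{L^\infty}$ and apply Cauchy-Schwarz in $y$, using $\rho\in L^\infty\cap L^1$ to bound $|y-x|\rho(y)$ in $L^1$ up to a first moment of $\rho$ (which is propagated by the equation under the standing $L^\infty$ assumptions).

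Putting these pieces together yields
\[
\frac{d}{dt}\bigl(\|\pa_xn\|_{H^s}^2+\|\pa_xu\|_{H^s}^2\bigr)
\lesssim \bigl(1+\|\pa_xn\|_{L^\infty}+\|\pa_xu\|_{L^\infty}\bigr)\bigl(\|\pa_xn\|_{H^s}^2+\|\pa_xu\|_{H^s}^2\bigr),
\]
and Gronwall finishes the a priori estimate. Combined with a standard local well-posedness result for \eqref{re-IE} and the usual continuation criterion, this extends the solution globally on $[0,T]$ in the stated spaces. The main technical delicacy remains the weighted estimate for the top-order alignment term, which is precisely the motivation for the hypothesis \eqref{eq:psicondpressure}; relaxing this hypothesis would require additional information on decay of $\rho$ that is not available from the assumed $L^\infty$ bounds alone.
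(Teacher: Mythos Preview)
Your overall architecture matches the paper's proof (carried out as Theorem~\ref{prop_2} and Lemma~\ref{lem:3}): apply $\Lambda^s\pa_x$ to both equations of \eqref{re-IE}, use the commutator Lemma~\ref{lem:1} for the transport terms, observe that the top-order pressure cross terms $(\Lambda^s\pa_xn,\Lambda^s\pa_x^2u)+(\Lambda^s\pa_xu,\Lambda^s\pa_x^2n)$ cancel upon summation and integration by parts, and treat the alignment force by a Leibniz expansion in which the intermediate terms $1\le i\le s$ are controlled via $\|\pa_x^i\psi\|_{L^\infty}\|\rho\|_{L^1}$.

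The gap is in your handling of the top-order alignment piece $\int_\R\pa_x^{s+1}\psi(x-y)(u(y)-u(x))\rho(y)\,dy$. Using the full Lipschitz bound $|u(y)-u(x)|\le\|\pa_xu\|_{L^\infty}|y-x|$ produces one full power of $|y-x|$, whereas the hypothesis \eqref{eq:psicondpressure} only provides $|x|^{1/2}\pa_x^{s+1}\psi\in L^2$. After your Cauchy--Schwarz split you are left with $\bigl(\int|y-x|\rho(y)^2\,dy\bigr)^{1/2}$, which is \emph{not} uniformly bounded in $x$: even with $\rho\in L^\infty\cap L^1$ and finite first moment, $\int|y-x|\rho(y)\,dy\ge|x|-\int|y|\rho(y)\,dy$ grows linearly in $|x|$, so the resulting $L^2_x$ norm diverges. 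Equivalently, the convolution bound you would need is $\||z|\,\pa_x^{s+1}\psi\|_{L^2}<\infty$, which is strictly stronger than \eqref{eq:psicondpressure}.

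The paper's remedy (Lemma~\ref{lem:3}) is to replace the Lipschitz bound by the one-dimensional H\"older estimate $|u(y)-u(x)|\le[u]_{C^{1/2}}|y-x|^{1/2}$ with $[u]_{C^{1/2}}\lesssim\|\pa_xu\|_{L^2}$ (simply Cauchy--Schwarz on $u(y)-u(x)=\int_x^y\pa_xu$). The half-power now matches the weight in \eqref{eq:psicondpressure} exactly, and Young's inequality gives
\[
\Bigl\|\int_\R\pa_x^{s+1}\psi(x-y)(u(y)-u(x))\rho(y)\,dy\Bigr\|_{L^2}
\le[u]_{C^{1/2}}\,\bigl\||x|^{1/2}\pa_x^{s+1}\psi\bigr\|_{L^2}\,\|\rho\|_{L^1}
\lesssim\|\pa_xu\|_{H^s},
\]
using only $\|\rho\|_{L^1}=1$ and no moment or $L^\infty$ information on $\rho$. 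With this correction your Gronwall argument closes exactly as you outlined.
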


For the similar reason as before, it is enough to prove the following estimate for the global solvability. 

\begin{remark} If the influence function $\psi$ is a constant, then we do not need the assumption \eqref{eq:psicondpressure} for $\psi$. Since the subscritical initial data obtained in Theorem \ref{thm:isothermal} for the system \eqref{eq:Eulerdamping} imply global in time bounds on $\|\pa_x (n,u)\|_{L^\infty}$, Theorem \ref{thm:p} deduce that the initial regularity persists globally in time. 
\end{remark}

\begin{theorem}\label{prop_2} Let $s\geq0$ be an integer. Define
$Y(t):=\|\pa_xn(\cdot,t)\|_{H^s}^2+\|\pa_xu(\cdot,t)\|_{H^s}^2$. If
the influence function $\psi$ satisfies \eqref{eq:psicondpressure}, then
\[
Y(T)\lesssim Y(0) \exp\lt(\int_0^T \lt(1 + \|\pa_x
n(\cdot,t)\|_{L^\infty}+\|\pa_x u(\cdot,t)\|_{L^\infty} \rt) dt \rt).
\]
\end{theorem}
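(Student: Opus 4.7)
The strategy mirrors the energy method used in Theorem~\ref{thm:globalbound}, but now applied to the pair $(n,u)=(\ln\rho,u)$: apply $\pa_x^{s+1}$ to both equations of \eqref{re-IE}, pair in $L^2$ with $\pa_x^{s+1}n$ and $\pa_x^{s+1}u$, respectively, add, and bound every resulting term by $(1+\|\pa_xn\|_{L^\infty}+\|\pa_xu\|_{L^\infty})\,Y(t)$. Two new features arise relative to the pressureless case: the pressure term $\pa_xn$ in the momentum equation linearly couples the two energies at top order, and $\rho=e^n$ is no longer assumed bounded in $L^\infty$; it is controlled only through $\pa_xn\in L^\infty$ together with the mass constraint $\|\rho\|_{L^1}=1$.

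\textbf{Cancellation and commutators.} The apparent loss of one derivative from the cross terms $-(\pa_x^{s+2}u,\pa_x^{s+1}n)$ (from the continuity equation) and $-(\pa_x^{s+2}n,\pa_x^{s+1}u)$ (from the momentum equation) is resolved by the structural observation that they sum to zero after a single integration by parts,
\[
-(\pa_x^{s+2}u,\pa_x^{s+1}n)-(\pa_x^{s+2}n,\pa_x^{s+1}u)
=(\pa_x^{s+1}u,\pa_x^{s+2}n)-(\pa_x^{s+2}n,\pa_x^{s+1}u)=0.
\]
Splitting the transport terms via $\pa_x^{s+1}(u\pa_xw)=u\pa_x^{s+2}w+[\pa_x^{s+1},u]\pa_xw$ for $w=n$ and $w=u$, integrating by parts once to extract $-\tfrac{1}{2}\!\int(\pa_xu)(\pa_x^{s+1}w)^2dx$, and invoking Lemma~\ref{lem:1} to control the commutators, one bounds all these contributions by $(\|\pa_xn\|_{L^\infty}+\|\pa_xu\|_{L^\infty})\,Y(t)$.

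\textbf{Alignment estimate.} The main new ingredient is a replacement for Lemma~\ref{lem:2}. Using the decomposition $\mathcal{A}=\psi\star(u\rho)-u(\psi\star\rho)$, I would transfer derivatives off $\psi$ by $s+1$ integrations by parts in $y$, reducing $\pa_x^{s+1}\mathcal{A}$ to a finite sum of terms of the shape $\int\pa_x^{j}\psi(x-y)\,\pa_y^{a}u(y)\,\pa_y^{b}\rho(y)\,dy$ with $j+a+b\leq s+1$, plus a residual surface piece $\int\pa_x^{s+1}\psi(x-y)(u(y)-u(x))\rho(y)\,dy$ coming from the frozen $u(x)$. Derivatives of $\rho=e^n$ are expanded by Faà di Bruno as $\pa^m\rho=\rho\,P_m(\pa n,\ldots,\pa^m n)$, so each factor of $\rho$ is absorbed into convolutions against bounded $\pa_x^j\psi$ (using $\pa_x\psi\in W^{s-1,\infty}$ and $\|\rho\|_{L^1}=1$), while the polynomial factors in $\pa^k n$ are handled by Gagliardo--Nirenberg interpolation together with the assumed $L^\infty$ bound on $\pa_xn$. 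For the surface piece, which is the only one unavoidably involving the top derivative of $\psi$, one uses $|u(y)-u(x)|\leq\|\pa_xu\|_{L^\infty}|y-x|$, Cauchy--Schwarz in $y$, and Fubini in $x$ to obtain the bound $\|\pa_xu\|_{L^\infty}\,\||x|^{1/2}\pa_x^{s+1}\psi\|_{L^2}$, which is finite by hypothesis~\eqref{eq:psicondpressure}. Pairing the resulting estimate on $\pa_x^{s+1}\mathcal{A}$ with $\pa_x^{s+1}u$ gives the desired bound, and Gronwall's inequality closes the argument.

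\textbf{Main obstacle.} The delicate step is the alignment estimate: unlike in the pressureless case, neither $\psi\in L^1$ nor $\|\rho\|_{L^\infty}<\infty$ is available, so the proof of Lemma~\ref{lem:2} does not transfer directly. The weighted condition $|x|^{1/2}\pa_x^\alpha\psi\in L^2$ is in fact calibrated precisely to the linear vanishing of the alignment kernel $(u(y)-u(x))$ at $y=x$ that enables the Cauchy--Schwarz step above. A secondary bookkeeping task is to verify, via the Faà di Bruno expansion of $\pa_x^m(e^n)$, that the top-order factor $\pa^{s+1}n$ appears only linearly in every resulting monomial, so that it can be paired against the $\pa_x^{s+1}u$ test function through Cauchy--Schwarz without any extra a priori control on $\rho$ beyond the mass normalization $\|\rho\|_{L^1}=1$.
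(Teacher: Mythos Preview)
Your overall architecture---commutator splitting of the transport terms, the integration-by-parts cancellation of the two cross terms $-(\pa_x^{s+2}u,\pa_x^{s+1}n)-(\pa_x^{s+2}n,\pa_x^{s+1}u)$, and Gronwall to close---matches the paper exactly. The divergence is entirely in how you treat the alignment term, and there the paper's route is both simpler and avoids a gap in your argument.

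\textbf{The paper's alignment estimate.} The paper never integrates by parts in $y$ and never differentiates $\rho$. Writing the alignment integrand as $\psi(x-y)(u(y)-u(x))\rho(y)$ and applying $\pa_x^\alpha$ in $x$, the only $x$-dependent factors are $\psi(x-y)$ and $-u(x)$; the Leibniz expansion is therefore
\[
\sum_{i=0}^\alpha\binom{\alpha}{i}\int\pa_x^i\psi(x-y)\,\pa_x^{\alpha-i}\big(u(y)-u(x)\big)\rho(y)\,dy.
\]
For $i=0$ the inner integral is $-(\psi\star\rho)(x)\pa_x^\alpha u(x)$, and after pairing with $\pa_x^\alpha u$ this contributes $-\int(\psi\star\rho)(\pa_x^\alpha u)^2\leq 0$ and is discarded. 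For $1\leq i\leq\alpha-1$ the inner integral is $-\pa_x^{\alpha-i}u(x)\cdot(\pa_x^i\psi\star\rho)(x)$, bounded by $\|\pa_x^{\alpha-i}u\|_{L^2}\|\pa_x^i\psi\|_{L^\infty}\|\rho\|_{L^1}$ using $\pa_x\psi\in W^{s-1,\infty}$. Only $i=\alpha$ retains the difference $u(y)-u(x)$, and here the paper uses the \emph{H\"older} bound $|u(y)-u(x)|\leq[u]_{C^{1/2}}|x-y|^{1/2}$ together with $[u]_{C^{1/2}}\lesssim\|\pa_xu\|_{L^2}$ and Young's inequality to get $\|\pa_x^\alpha u\|_{L^2}\,\|\pa_xu\|_{L^2}\,\||x|^{1/2}\pa_x^\alpha\psi\|_{L^2}\,\|\rho\|_{L^1}$. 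No Fa\`a di Bruno, no $\pa_y^b\rho$, no $L^\infty$ control on $\rho$ is ever needed; the hypothesis \eqref{eq:psicondpressure} is used in exactly these two places.

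\textbf{The gap in your surface piece.} Your stated bound $|u(y)-u(x)|\leq\|\pa_xu\|_{L^\infty}|y-x|$ followed by Cauchy--Schwarz does \emph{not} produce the weight $|x|^{1/2}$: working through either order of Cauchy--Schwarz/Fubini you inevitably land on $\||x|\,\pa_x^{s+1}\psi\|_{L^2}$ (or on an uncontrolled moment of $\rho$), which is not assumed. The exponent $1/2$ in the hypothesis is calibrated to the $C^{1/2}$ embedding $\dot H^1(\R)\hookrightarrow C^{1/2}(\R)$, not to Lipschitz continuity. Replace your Lipschitz step by $|u(y)-u(x)|\leq\|\pa_xu\|_{L^2}|y-x|^{1/2}$ and the estimate goes through. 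Once you make that fix, your Fa\`a di Bruno/IBP-in-$y$ program for the remaining terms is in principle workable, but it is substantially heavier than the paper's direct $\pa_x$-only expansion, which sidesteps all regularity questions about $\rho$ by never touching it.
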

\begin{proof}
The proof is similar as in Theorem \ref{thm:globalbound}.

We start with acting operator
$\Lambda^s\pa_x$ on equation \eqref{re-IE}$_1$ and integrate by parts against
$\Lambda^s\pa_xn$:
\[\frac{1}{2}\frac{d}{dt}\|\pa_xn(\cdot,t)\|^2_{H^s} = 
-\left(\left[\Lambda^s\pa_x,u\right]\pa_xn,\Lambda^s\pa_xn\right)
+\frac{1}{2}\left(\Lambda^s\pa_xn,(\div\u)\Lambda^s\pa_xn\right)
-\left(\Lambda^s\pa_sn, \Lambda^s\pa_x^2u\right).\]
The commutator estimate in Lemma \ref{lem:1} implies
\[\|\left[\Lambda^s\div,\u\right]\pa_xn\|_{L^2}\lesssim\|\grad\u\|_{L^\infty}\|\pa_xn\|_{H^s}
+\|\pa_xu\|_{H^s}\|\pa_xn\|_{L^{\infty}}.\]
With this, we deduce the following estimate
\bq\label{eq:nest}
\frac{1}{2}\frac{d}{dt}\|\pa_xn(\cdot,t)\|^2_{H^s} +\left(\Lambda^s\pa_sn,
  \Lambda^s\pa_x^2u\right) \lesssim
\left[\|\pa_xn\|_{L^\infty}+\|\grad\u\|_{L^\infty}\right]
\left(\|\pa_xn\|_{H^s}^2+\|\pa_xu\|_{H^s}^2\right).
\eq
Similarly, for equation \eqref{re-IE}$_2$, we have
\begin{align*}
\frac{1}{2}\frac{d}{dt}\|\pa_x\u(\cdot,t)\|^2_{H^s}& = 
-\big(\left[\Lambda^s\pa_x,u\right]\pa_x\u,\Lambda^s\pa_x\u\big)
+\frac{1}{2}\left(\Lambda^s\pa_x\u,(\div\u)\Lambda^s\pa_x\u\right) 
-\left(\Lambda^s\pa_xu,\pa_x\Lambda^s\pa_x^2n\right)\\
& +\left(\Lambda^s\pa_x\u, \Lambda^s\pa_x
\int_{\R}\psi(\x-\y)(\u(\y)-\u(\x))\rho(\y)d\y\right).
\end{align*}

With the same argument as in Theorem \ref{thm:globalbound}, we get
\bq\label{eq:uest}
\frac{1}{2}\frac{d}{dt}\|\pa_x\u(\cdot,t)\|_{H^s}^2
+\left(\Lambda^s\pa_xu,\pa_x\Lambda^s\pa_x^2n\right)
\lesssim\left[1+\|\rho\|_{L^\infty}+\|\grad\u\|_{L^\infty}\right]
\left(\|\pa_xn\|_{H^s}^2+\|\pa_x\u\|_{H^s}^2\right),
\eq
provided the following estimate for the alignment term is true.
\bq\label{eq:lem3}
\left(\Lambda^s\pa_x\u, \Lambda^s\pa_x
\int_{\R}\psi(\x-\y)(\u(\y)-\u(\x))\rho(\y)d\y\right)
\lesssim\|\pa_x\u\|_{H^s}^2.
\eq
See Lemma \ref{lem:3} for the proof of this inequality.

Finally, we add \eqref{eq:nest}-\eqref{eq:uest} and use
Gronwall's inequality to end the proof.
\end{proof}

We end this section by proving the estimate \eqref{eq:lem3}. A stronger
regularity on $\psi$ is required (compared with Lemma \ref{lem:2}) as
we do not have regularity for $\rho$ any more.
\begin{lemma}\label{lem:3}
If $\psi$ satisfies \eqref{eq:psicondpressure}, then the estimate \eqref{eq:lem3} is satisfied.
\end{lemma}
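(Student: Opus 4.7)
The plan is to reduce the claim, via Cauchy--Schwarz, to the $H^s$ estimate
\[
\|\pa_xQ\|_{H^s}\lesssim\|\pa_xu\|_{H^s},\qquad Q(x):=\int_\R\psi(x-y)(u(y)-u(x))\rho(y)\,dy,
\]
and then to bound $\|\pa_x^{k+1}Q\|_{L^2}$ for each $0\le k\le s$. In contrast to Lemma \ref{lem:2}, no $H^s$ regularity is available for $\rho$ (only $\pa_xn=\pa_x\rho/\rho$), so every derivative must be placed on $\psi$ or on $u(x)$ --- none on $\rho(y)$.

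Expanding $\pa_x^{k+1}[\psi(x-y)(u(y)-u(x))]$ by Leibniz and using $\pa_x^j u(y)=0$ for $j\ge 1$, one arrives at
\[
\pa_x^{k+1}Q(x)=J_k(x)-\sum_{j=0}^{k}\binom{k+1}{j}\pa_x^{k+1-j}u(x)\,(\pa_x^j\psi\star\rho)(x),
\]
with $J_k(x):=\int_\R\pa_x^{k+1}\psi(x-y)(u(y)-u(x))\rho(y)\,dy$. The sum terms are immediate: since $\|\rho\|_{L^1}=1$, Young's inequality gives $\|\pa_x^j\psi\star\rho\|_{L^\infty}\le\|\pa_x^j\psi\|_{L^\infty}$, finite for $0\le j\le k\le s$ by \eqref{eq:psicondpressure} (together with $\psi\in L^\infty$ from the introduction); combined with $\|\pa_x^{k+1-j}u\|_{L^2}\le\|\pa_xu\|_{H^s}$ for $1\le k+1-j\le s+1$, each term contributes $\lesssim\|\pa_xu\|_{H^s}$ in $L^2$.

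The main work is the critical term $J_k$. I would split the integrand as
$[\pa_x^{k+1}\psi(x-y)|x-y|^{1/2}\rho(y)^{1/2}]\cdot[(u(y)-u(x))|x-y|^{-1/2}\rho(y)^{1/2}]$
and apply Cauchy--Schwarz in $y$ to obtain $|J_k(x)|^2\le A_k(x)B(x)$ with
\[
A_k(x)=\int_\R|\pa_x^{k+1}\psi(x-y)|^2|x-y|\rho(y)\,dy,\qquad B(x)=\int_\R\frac{|u(y)-u(x)|^2}{|x-y|}\rho(y)\,dy.
\]
A pointwise Hardy-type estimate --- starting from $u(y)-u(x)=\int_x^y\pa_zu\,dz$ and using Cauchy--Schwarz to get $|u(y)-u(x)|^2/|y-x|\le\int_{\min(x,y)}^{\max(x,y)}|\pa_zu(z)|^2\,dz\le\|\pa_xu\|_{L^2}^2$ --- combined with $\|\rho\|_{L^1}=1$ gives the uniform bound $B(x)\le\|\pa_xu\|_{L^2}^2$. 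Meanwhile Fubini together with the change of variables $z=x-y$ yields $\|A_k\|_{L^1}=\||\cdot|^{1/2}\pa_x^{k+1}\psi\|_{L^2}^2$, finite by \eqref{eq:psicondpressure} at the level $\alpha=k+1\in[1,s+1]$. Hence $\|J_k\|_{L^2}^2\le\|A_k\|_{L^1}\|B\|_{L^\infty}\lesssim\|\pa_xu\|_{L^2}^2$, and summing $k=0,\dots,s$ together with the sum-term estimates yields $\|\pa_xQ\|_{H^s}\lesssim\|\pa_xu\|_{H^s}$, from which \eqref{eq:lem3} follows.

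The main obstacle is precisely the absence of derivatives on $\rho$; the Kato--Ponce scheme of Lemma \ref{lem:2} no longer closes, which is exactly why the stronger weighted hypothesis on $\psi$ in \eqref{eq:psicondpressure} is imposed. The structural point is that the singular weight $|x-y|^{-1/2}$ appearing in $B$ is exactly compensated by the Sobolev-style gain from $u(y)-u(x)=\int_x^y\pa_zu\,dz$, while the complementary weight $|x-y|^{1/2}$ in $A_k$ absorbs the possible lack of integrability of $|\pa_x^{k+1}\psi|^2$ at infinity --- keeping all constants independent of $\rho$ beyond the unit mass.
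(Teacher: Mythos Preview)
Your proof is correct and follows essentially the same route as the paper's: the Leibniz expansion that keeps all derivatives off $\rho$, the $L^\infty$ bounds on $\pa_x^j\psi\star\rho$ for the intermediate terms, and the weighted-$L^2$ hypothesis on $\pa_x^{k+1}\psi$ for the critical term $J_k$ are exactly the ingredients the paper uses. The only cosmetic differences are that the paper drops the $j=0$ contribution by sign (it equals $-\int(\psi\star\rho)|\pa_x^\alpha u|^2\le 0$) rather than bounding it via $\|\psi\|_{L^\infty}$, and it handles $J_k$ by first invoking $|u(y)-u(x)|\le[u]_{C^{1/2}}|x-y|^{1/2}\le\|\pa_xu\|_{L^2}|x-y|^{1/2}$ and then Young's convolution inequality, which is the same estimate as your Cauchy--Schwarz/Fubini split into $A_k$ and $B$.
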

\begin{proof}
It suffices to prove for all $1\leq\alpha\leq s+1$,
\[\left(\pa_x^\alpha\u, \pa_x^\alpha
\int_{\R}\psi(\x-\y)(\u(\y)-\u(\x))\rho(\y)d\y\right)
\lesssim\|\pa_x\u\|_{H^s}^2.\]
In fact, we get
\begin{align*}
\text{LHS}=&\left(\pa_x^\alpha\u, \sum_{i=0}^\alpha{\alpha\choose i}
\int_{\R}\pa_x^i\psi(\x-\y)\pa_x^{\alpha-i}(\u(\y)-\u(\x))\rho(\y)d\y\right)\\
\leq&-(\psi\star\rho)\|\pa_x^\alpha u\|_{L^2}^2
+\|\pa_x^\alpha u\|_{L^2}\sum_{i=1}^{\alpha-1}{\alpha\choose i}
\left\|\pa_x^{\alpha-i}u\cdot\pa_x^i(\psi\star\rho)\right\|_{L^2}\\
&+\|\pa_x^\alpha
u\|_{L^2}\left\|\int_\R\pa_x^\alpha\psi(x-y)(u(y,t)-u(x,t))\rho(y,t)dy\right\|_{L^2}\\
\leq& ~0+\mathrm{I}+\mathrm{II}.
\end{align*}
Next, we estimate \textrm{I} and \textrm{II} term by term. 
\begin{align*}
\textrm{I}~\leq~&\|\pa_x^\alpha u\|_{L^2}\sum_{i=1}^{\alpha-1}{\alpha\choose i}
\|\pa_x^{\alpha-i}u\|_{L^2}\|\pa_x^i\psi\|_{L^\infty}\|\rho\|_{L^1}\lesssim
\|\pa_x\psi\|_{W^{s-1,\infty}}\|\pa_xu\|_{H^s}^2\lesssim\|\pa_x\u\|_{H^s}^2,\\
\textrm{II}~\leq~&\|\pa_x^\alpha u\|_{L^2}
[u]_{C^{1/2}}\big\||x|^{1/2}\pa_x^\alpha\psi(x)\star\rho\big\|_{L^2}
\cr
&\hspace{-0.5cm} \lesssim \|\pa_x^\alpha u\|_{L^2}
\|\pa_xu\|_{L^2}\| |x|^{1/2}\pa_x^\alpha\psi(x)\|_{L^2}\|\rho\|_{L^1}\lesssim\|\pa_x\u\|_{H^s}^2.
\end{align*}
This concludes the proof.
\end{proof}

\end{appendix}

\noindent
\textbf{Acknowledgments.} Research was supported by NSF grant RNMS11-07444 (KI-Net). Additional support was provided  by the project MTM2011-27739-C04-02
DGI (Spain) and 2009-SGR-345 from AGAUR-Generalitat de Catalunya and by the Royal Society through a Wolfson Research Merit Award (JAC), by Engineering and Physical Sciences Research Council grant EP/K008404/1 (JAC and YPC), by  NSF grant DMS10-08397 (ET) and by  ONR grant N00014-1512094 ONR (ET and CT).

%%%%%%%%%%%%%%%%%%%%%%%%%%%%%%%%%%%%%%%%%%%%%%%%%%%%%%%%%%%%%%%%%%%%%%%%%%%%%%%%%
%
%
%                        thebibliography
%
%
%%%%%%%%%%%%%%%%%%%%%%%%%%%%%%%%%%%%%%%%%%%%%%%%%%%%%%%%%%%%%%%%%%%%%%%%%%%%%%%%%

\end{document}